\newtheorem{thm}{Theorem}[section]
\newtheorem{lem}{Lemma}[section]
\newtheorem{prop}{Proposition}[section]
\theoremstyle{definition}
\newtheorem{defn}{Definition}[section]
\newtheorem{rem}{Remark}[section]
\numberwithin{equation}{section}
\def\i1n{i=1,\cdots,n}
\def\j1n{j=1,\cdots,n}
\def\ij1n{i,j=1,\cdots,n}
\def\R{\mathbb R}
\def\N{\mathbb N}
\def\C{\mathbb C}
\newcommand{\be}{\begin{equation}}
	\newcommand{\ee}{\end{equation}}
\newcommand{\beq}{\begin{equation*}}
	\newcommand{\eeq}{\end{equation*}}
\begin{document}
	\title{The Dichotomy Property in Stabilizability of $2 \times 2 $ Linear Hyperbolic Systems}
	\author{Xu Huang}\address{School of Mathematical Sciences, Fudan University, Shanghai 200433, China. E-mail: \texttt{xhuang20@fudan.edu.cn}.}
	\author{Zhiqiang Wang}\address{School of Mathematical Sciences and Shanghai Key Laboratory for Contemporary Applied Mathematics,
		Fudan University, Shanghai 200433, P. R. China. 
		E-mail: \texttt{wzq@fudan.edu.cn}. }
	\author{Shijie Zhou}\address{Research Institute of Intelligent Complex Systems,
		Fudan University, Shanghai 200433, China. E-mail: \texttt{sjzhou14@fudan.edu.cn}.}
	
	\date{April 27, 2023}
	\begin{abstract}
		This paper is devoted to discuss the stabilizability of a class of $ 2 \times2 $ non-homogeneous hyperbolic systems. Motivated by the example in \cite[Page 197]{CB2016}, we analyze the influence of the interval length $L$ on stabilizability of the system. By spectral analysis,  we prove that either the system is stabilizable for all $L>0$ or it possesses the dichotomy property: there exists a critical length $L_c>0$ such that the system is stabilizable for $L\in (0,L_c)$ but unstabilizable for $L\in [L_c,+\infty)$. In addition, for $L\in [L_c,+\infty)$, we obtain that the system can reach equilibrium state in finite time by backstepping control combined with observer. Finally, we also provide some numerical simulations to confirm our developed analytical criteria.
	\end{abstract}
	\subjclass{93D15,  
		35L50, 
		62M15.} 
	\keywords{Hyperbolic systems,  Stabilization, 
		Spectral analysis.}
	\maketitle

	\section{Introduction and Main Result}
	
	\par 
	
	Hyperbolic systems play a crucial role in representing physical phenomena and possess both theoretical and practical significance. Extensive research has focused on well-posedness and control problems, including the stability and stabilization of these systems. In particular, researchers have studied the exponential stability or stabilization of hyperbolic systems without source terms in both linear and nonlinear cases, under various boundary controls such as Proportional-Integral control and backstepping control \cite{Hu2019}. Previous works by \cite{BCT2015, CB2015, CBD2008, MR3567480, LRW2010, MR793047} have also investigated this topic. However, most physical equations, such as the Saint-Venant equations (see Chapter 5 of \cite{CORON2007}), Euler equations (see \cite{MR2740529} or \cite{MR3669664}), and Telegrapher equations, cannot neglect the source term. Therefore, it is crucial to investigate the dynamics of hyperbolic systems with source terms.
	\par Two main approaches have been used to achieve asymptotic stability of hyperbolic systems: analyzing the evolution of the solution along characteristic curves, as extensively studied in previous works such as \cite{MR2311519,MR1920513,LI1994,MR2974728,MR2411418}; and relying on a Lyapunov function approach, as thoroughly researched in \cite{CORON2007, CB2015, CBD2008, CBD2007, CBD2012, MR2805928, MR2871937, MR2513092, MR1925036}. Both of these approaches are concerned with obtaining stability for the system.
	\par Another strategy that has been studied is the Backstepping method, which aims to design a control law that achieves stabilization. The Backstepping method has been applied in \cite{Hu2021,Hu2019} and typically requires full-state feedback control. However, it is possible to achieve boundary state feedback through backstepping control by designing an appropriate observer, as demonstrated in \cite{An2017,An2018}.
	\par In \cite[Page 197]{CB2016}, Bastin and Coron mention that for some systems of balance laws, there is an intrinsic limit of stabilization under local boundary control. It is proved that the following system 
	\begin{align}\label{coron}
		\begin{cases}
			\partial_{t}y_{1}+\partial_{x}y_{1}+y_{2}=0,\quad &(t,x)\in(0,+\infty)\times(0,L),\\
			\partial_{t}y_{2}-\partial_{x}y_{2}+y_{1}=0,\quad &(t,x)\in(0,+\infty)\times(0,L),\\
			y_{2}(t,L)=y_{1}(t,L),\quad &t\in(0,+\infty),\\
			y_{1}(t,0)=ky_{2}(t,0),\quad &t\in(0,+\infty).
		\end{cases}
	\end{align}
	cannot be stabilized for any $k\in\R$ if $L\in(\frac{\pi}{c},+\infty)$. On the other hand, the system \eqref{coron} is stabilizable if $L\in(0,\frac{\pi}{2c})$ from \cite{CB2011}.
	However, there remains a gap on $L$ between the stabilizable and unstabilizable cases.
	\par In \cite{GM2019}, Gugat and Gerster analyze the limit of stabilizability for a network of hyperbolic systems. Remarkably, their results reveal that under certain conditions, the system may be inherently unstabilizable.
	\par These results inspire us to investigate whether the stabilizability of the system (\ref{coron}) possesses the dichotomy property on $L$. Here, the dichotomy property on $L$ can be described as follows: there exists a critical value $L_{c}>0$ such that:
	\begin{itemize}
		\item While $L\ge L_{c}$, the system is not stabilizable, i.e. the system cannot be exponentially stable for any discussed control.
		\item While $0<L<L_{c}$, the system is stabilizable, i.e. there exists certain control such that the system is exponentially stable.
	\end{itemize}	
	\par In this paper we discuss the boundary feedback stabilization of the following $ 2 \times2 $ linear hyperbolic systems over a bounded interval $[0,L]$:
	\begin{align}
		\begin{cases}\label{system}
			\partial_{t}y_{1}+\partial_{x}y_{1}+ay_{2}=0,\quad& (t,x)\in(0,+\infty)\times(0,L),\\
			\partial_{t}y_{2}-\lambda\partial_{x}y_{2}+by_{1}=0,\quad& (t,x)\in(0,+\infty)\times(0,L),\\
			y_{2}(t,L)=y_{1}(t,L),\quad& t\in(0,+\infty),\\
			y_{1}(t,0)=u(t),\quad& t\in(0,+\infty),\\
			\big(y_1(0,x),y_2(0,x)\big)=\left(y_1^0(x),y_2^0(x)\right),\quad& x\in(0,L).
		\end{cases}
	\end{align}
	where $\lambda>0$ and $a,b\in\R$ are given constants, $y_1^0, y_2^0 \in  L^2(0,L)$ are the initial data.  
	\par The boundary feedback law takes the proportional form
	\begin{align} 
		\label{boun}
		u(t)=ky_{2}(t,0),
	\end{align}
	where $k\in\mathbb{R}$ is the tuning parameter and $y_2(t,0)$ is the output measurement. 
	\par We are concerned about the exponential stability of the closed-loop system (\ref{system}). 
	\begin{defn}
		The linear hyperbolic system (\ref{system}) (\ref{boun}) is said to be $L^2$ exponentially stable if there exists $C>0$ and $\alpha>0$ such that, for every $(y_1^0(x),y_2^0(x))\in L^2(0,L)\times L^2(0,L)$
		the solution to the system (\ref{system}) (\ref{boun}) satisfies:
		\begin{equation}\label{L2}
			\Big\Vert\big(y_1(t,\cdot),y_2(t,\cdot)\big)\Big\Vert_{L^2(0,L)}\le C{\rm e}^{-\alpha t}\big\Vert (y_1^0,y_2^0)\big\Vert_{L^2(0,L)},\ t\ge 0.
		\end{equation}
	\end{defn}  
\par In this paper, we propose a method for finding the critical value $L_c$, given fixed parameters $a,b,\lambda$. Our approach is based on spectral analysis. For values of  $|k|\ge1$, we demonstrate that the closed-loop system (\ref{system}) (\ref{boun}) is not exponentially stable by identifying unstable eigenvalues, specifically those located on the right half of the complex plane. To achieve this, we approximate the characteristic function at infinity and use Rouché’s Theorem to obtain the roots. In the case of $|k|<1$, we introduce the function $N_{a,b,\lambda}(k,L)$ to represent the degree of the characteristic function, see \eqref{e13}, on the right side of the complex plane. As stated in Lemma \ref{lemma2}, $N_{a,b,\lambda}$ remains constant within each block that is separated by marginal curves determined by $\mathcal{A}_{a,b,\lambda}$. Furthermore, $N_{a,b,\lambda}\equiv 0$ within the block at the bottom. By applying Lemma \ref{lemma3}, we show that $N_{a,b,\lambda}$ increases by 1 when $(k,L)$ moves from one block to another block above it. Therefore, we conclude that the stability region is the block at the bottom. Finally, we determine the critical value $L_c$ using the analytical results obtained from the marginal curves determined by $A_{a,b,\lambda}$.  
	\begin{thm}\label{main theorem}
		Let $\lambda>0$ be fixed. Then, either the system (\ref{system}) (\ref{boun}) is stabilizable for all $L>0$ or it possesses the dichotomy property. More precisely, the expression of $L_c$ in terms of $a,b\in \R$ is explicitly given as follows (see Figure 1.1) : 
		\begin{equation}
			\begin{aligned}
				\displaystyle	
				L_c=\begin{cases}
					\sqrt{\frac{\lambda}{ab}}\pi,\quad &\text{if}\ a>0,b>0.\\
					\sqrt{\frac{\lambda}{ab}}\arccot(\frac{b-\lambda a}{2\sqrt{\lambda ab}}),\quad &\text{if}\ a<0, b<0.\\
					\sqrt{\frac{-\lambda}{ab}}\coth^{-1}(\frac{b-\lambda a}{2\sqrt{-\lambda ab}}),\quad &\text{if}\ -\lambda a>b>0.\\
					-\frac{2}{a},\quad &\text{if}\ b=0, a<0.\\
					+\infty,\quad &\text{if else.}
				\end{cases}
			\end{aligned}
		\end{equation}
		Here  $L_{c}:=+\infty$ means that the system is stabilizable for all $L>0$. 
	\end{thm}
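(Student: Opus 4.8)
The plan is to recast $L^2$ exponential stability as a spectral condition and then follow the eigenvalues across the imaginary axis as $(k,L)$ ranges, with $a,b,\lambda$ held fixed. For hyperbolic systems of this class the growth bound of the closed-loop $C_0$-semigroup agrees with its spectral bound, so \eqref{L2} holds for some $\alpha>0$ precisely when every zero of the characteristic function lies in a half-plane $\{\operatorname{Re}s\le-\alpha\}$, and it fails the moment zeros reach or cross the imaginary axis. I would first make the characteristic function explicit. Substituting $y_j(t,x)=e^{st}\phi_j(x)$ turns the two PDEs into a constant-coefficient ODE system whose spatial exponents $\mu_\pm(s)$ solve $\lambda\mu^2+s(\lambda-1)\mu-(s^2-ab)=0$; writing the general solution, imposing $y_1(0)=ky_2(0)$ and $y_2(L)=y_1(L)$, and demanding a nontrivial kernel yields the entire characteristic determinant \eqref{e13}, whose zeros are exactly the eigenvalues and whose right-half-plane zeros are counted by $N_{a,b,\lambda}$.

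Next I would settle the regime $|k|\ge1$, where stabilization must fail for every $L>0$. Here the argument is asymptotic: I expand the characteristic determinant as $|s|\to\infty$, extract its dominant exponential balance, and show the limiting characteristic function carries infinitely many zeros asymptotic to a vertical line in the closed right half-plane. Applying Rouch\'e's Theorem on a nested family of large contours transfers these zeros to the exact determinant, so the closed loop retains modes with $\operatorname{Re}s\ge0$ and cannot satisfy \eqref{L2}. Consequently any hope of stabilization lives in the strip $|k|<1$, and the problem collapses to controlling the right-half-plane zero count $N_{a,b,\lambda}(k,L)$ there.

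On $|k|<1$ I would run the root-counting machinery supplied by the structural lemmas. Lemma \ref{lemma2} makes $N_{a,b,\lambda}$ constant on each block into which the marginal curves $\mathcal{A}_{a,b,\lambda}$ (the loci carrying an imaginary-axis zero) partition the strip, and pins it to $0$ on the bottom block; Lemma \ref{lemma3} says that crossing a marginal curve upward increases $N_{a,b,\lambda}$ by exactly $1$. Together they identify $\{N_{a,b,\lambda}=0\}$ with the bottom block, so the stable region is precisely that block and the critical length is its height against the admissible window, $L_c=\sup\{L:\exists\,k\in(-1,1)\text{ with }(k,L)\text{ in the bottom block}\}=\sup_{k\in(-1,1)}L_{\min}(k)$, where $L_{\min}(k)$ traces the lowest marginal curve.

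The last and hardest step is to evaluate this supremum. Setting $s=0$ in the ODE reduces it to $\phi_1''+(ab/\lambda)\phi_1=0$, and feeding the two boundary conditions into its solution produces the lowest marginal curve in closed form: a transcendental relation of the shape $\tan(\sqrt{ab/\lambda}\,L)=\frac{(k-1)\lambda\sqrt{ab/\lambda}}{\lambda a+kb}$ when $ab>0$, its hyperbolic analogue $\tanh(\sqrt{-ab/\lambda}\,L)=\frac{(k-1)\lambda\sqrt{-ab/\lambda}}{\lambda a+kb}$ when $ab<0$, and the linear degeneration $aL=k-1$ at $b=0$. Since the right-hand side is monotone in $k$ away from its poles, the supremum over $(-1,1)$ is attained at an endpoint $k\to1^-$ or $k\to-1^+$, whichever the sign of $\lambda a+b$ selects; substituting that endpoint and choosing the branch of the first crossing gives $\pi\sqrt{\lambda/ab}$ (at $k\to1^-$), the $\operatorname{arccot}$ and $\coth^{-1}$ values (at $k\to-1^+$), and $-2/a$ in the degenerate case, while the configurations in which the marginal curve never meets the strip yield $L_c=+\infty$. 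I expect the genuine difficulty to be exactly this bookkeeping: selecting the correct endpoint and branch in each sign regime, ruling out that an $s=i\omega$ marginal curve with $\omega\ne0$ sits lower and overrides the $s=0$ one, and verifying the threshold inequalities (e.g. $-\lambda a>b>0$) that separate the finite-$L_c$ cases from the unconditionally stabilizable ones.
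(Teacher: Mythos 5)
Your plan follows the paper's proof almost step for step: the same characteristic determinant, the same spectral-mapping reduction, the same Rouch\'e argument for $|k|\ge 1$, the same use of Lemmas \ref{lemma2} and \ref{lemma3} to pin $N_{a,b,\lambda}$ to zero on the bottom block, and the same extraction of $L_c$ as the supremum over $k\in(-1,1)$ of the lowest marginal curve, evaluated at the endpoints $k\to\pm 1$. Two points, however, are left open in your sketch and are precisely where the paper has to do real work.

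First, you flag but do not resolve the possibility that a marginal curve produced by a purely imaginary eigenvalue $\sigma=\mathrm{i}\omega$ with $\omega\ne 0$ sits below the $\sigma=0$ curve. Everything in your final step --- the closed forms $\pi\sqrt{\lambda/(ab)}$, the $\arccot$ and $\coth^{-1}$ expressions, and the thresholds such as $-\lambda a>b>0$ --- rests on the claim that $\mathcal{A}_{a,b,\lambda}$ is exactly the $\sigma=0$ locus, so this is not bookkeeping but the crux of obtaining the explicit formula. The paper closes it with a short but essential observation (Appendix \ref{appendixa}): for $\sigma=\mathrm{i}\beta$ one has $\eta^2\in\R$, hence $\cosh(\eta L)$ and $\sinh(\eta L)/\eta$ are real, so the imaginary part of $F_{a,b,\lambda,k,L}(\mathrm{i}\beta)=0$ reads $(k+1)\frac{\lambda+1}{2\lambda}\beta\,\frac{\sinh(\eta L)}{\eta}=0$; since $k\ne -1$ and $\sinh(\eta L)/\eta=0$ would force $k=1$ in the real part, necessarily $\beta=0$. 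Second, your uniform Rouch\'e treatment of $|k|\ge 1$ breaks down at $|k|=1$: there $\ln|k|=0$, the zeros of the limiting function $G$ lie on the imaginary axis, and the neighborhoods $\Omega^{\varepsilon}_{k,n}$ are no longer contained in $\mathbb{C}_+$, so Rouch\'e only places zeros of $H$ within $O(\varepsilon)$ of the axis rather than in $\overline{\mathbb{C}_+}$. One can repair this by letting $\varepsilon\to 0$ to conclude $s(\mathscr{A})\ge 0$, but the paper instead computes the $k=\pm 1$ spectra exactly and exhibits infinitely many eigenvalues sitting on the imaginary axis. Neither gap requires a new idea, but both must be filled before the stated case distinctions for $L_c$ are actually proved.
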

	\begin{center}

		
		\tikzset{
			pattern size/.store in=\mcSize, 
			pattern size = 5pt,
			pattern thickness/.store in=\mcThickness, 
			pattern thickness = 0.3pt,
			pattern radius/.store in=\mcRadius, 
			pattern radius = 1pt}
		\makeatletter
		\pgfutil@ifundefined{pgf@pattern@name@_bcqvlgi96}{
			\pgfdeclarepatternformonly[\mcThickness,\mcSize]{_bcqvlgi96}
			{\pgfqpoint{0pt}{0pt}}
			{\pgfpoint{\mcSize}{\mcSize}}
			{\pgfpoint{\mcSize}{\mcSize}}
			{
				\pgfsetcolor{\tikz@pattern@color}
				\pgfsetlinewidth{\mcThickness}
				\pgfpathmoveto{\pgfqpoint{0pt}{\mcSize}}
				\pgfpathlineto{\pgfpoint{\mcSize+\mcThickness}{-\mcThickness}}
				\pgfpathmoveto{\pgfqpoint{0pt}{0pt}}
				\pgfpathlineto{\pgfpoint{\mcSize+\mcThickness}{\mcSize+\mcThickness}}
				\pgfusepath{stroke}
		}}
		\makeatother
		
		
		\tikzset{
			pattern size/.store in=\mcSize, 
			pattern size = 5pt,
			pattern thickness/.store in=\mcThickness, 
			pattern thickness = 0.3pt,
			pattern radius/.store in=\mcRadius, 
			pattern radius = 1pt}
		\makeatletter
		\pgfutil@ifundefined{pgf@pattern@name@_1fqgv8bx5}{
			\pgfdeclarepatternformonly[\mcThickness,\mcSize]{_1fqgv8bx5}
			{\pgfqpoint{0pt}{0pt}}
			{\pgfpoint{\mcSize}{\mcSize}}
			{\pgfpoint{\mcSize}{\mcSize}}
			{
				\pgfsetcolor{\tikz@pattern@color}
				\pgfsetlinewidth{\mcThickness}
				\pgfpathmoveto{\pgfqpoint{0pt}{\mcSize}}
				\pgfpathlineto{\pgfpoint{\mcSize+\mcThickness}{-\mcThickness}}
				\pgfpathmoveto{\pgfqpoint{0pt}{0pt}}
				\pgfpathlineto{\pgfpoint{\mcSize+\mcThickness}{\mcSize+\mcThickness}}
				\pgfusepath{stroke}
		}}
		\makeatother
		
		
		\tikzset{
			pattern size/.store in=\mcSize, 
			pattern size = 5pt,
			pattern thickness/.store in=\mcThickness, 
			pattern thickness = 0.3pt,
			pattern radius/.store in=\mcRadius, 
			pattern radius = 1pt}
		\makeatletter
		\pgfutil@ifundefined{pgf@pattern@name@_lqcoc3w0z}{
			\pgfdeclarepatternformonly[\mcThickness,\mcSize]{_lqcoc3w0z}
			{\pgfqpoint{0pt}{0pt}}
			{\pgfpoint{\mcSize}{\mcSize}}
			{\pgfpoint{\mcSize}{\mcSize}}
			{
				\pgfsetcolor{\tikz@pattern@color}
				\pgfsetlinewidth{\mcThickness}
				\pgfpathmoveto{\pgfqpoint{0pt}{\mcSize}}
				\pgfpathlineto{\pgfpoint{\mcSize+\mcThickness}{-\mcThickness}}
				\pgfpathmoveto{\pgfqpoint{0pt}{0pt}}
				\pgfpathlineto{\pgfpoint{\mcSize+\mcThickness}{\mcSize+\mcThickness}}
				\pgfusepath{stroke}
		}}
		\makeatother
		
		
		\tikzset{
			pattern size/.store in=\mcSize, 
			pattern size = 5pt,
			pattern thickness/.store in=\mcThickness, 
			pattern thickness = 0.3pt,
			pattern radius/.store in=\mcRadius, 
			pattern radius = 1pt}
		\makeatletter
		\pgfutil@ifundefined{pgf@pattern@name@_vhll9ywue}{
			\pgfdeclarepatternformonly[\mcThickness,\mcSize]{_vhll9ywue}
			{\pgfqpoint{0pt}{0pt}}
			{\pgfpoint{\mcSize}{\mcSize}}
			{\pgfpoint{\mcSize}{\mcSize}}
			{
				\pgfsetcolor{\tikz@pattern@color}
				\pgfsetlinewidth{\mcThickness}
				\pgfpathmoveto{\pgfqpoint{0pt}{\mcSize}}
				\pgfpathlineto{\pgfpoint{\mcSize+\mcThickness}{-\mcThickness}}
				\pgfpathmoveto{\pgfqpoint{0pt}{0pt}}
				\pgfpathlineto{\pgfpoint{\mcSize+\mcThickness}{\mcSize+\mcThickness}}
				\pgfusepath{stroke}
		}}
		\makeatother
		
		
		\tikzset{
			pattern size/.store in=\mcSize, 
			pattern size = 5pt,
			pattern thickness/.store in=\mcThickness, 
			pattern thickness = 0.3pt,
			pattern radius/.store in=\mcRadius, 
			pattern radius = 1pt}
		\makeatletter
		\pgfutil@ifundefined{pgf@pattern@name@_vyn3uszso}{
			\pgfdeclarepatternformonly[\mcThickness,\mcSize]{_vyn3uszso}
			{\pgfqpoint{0pt}{0pt}}
			{\pgfpoint{\mcSize}{\mcSize}}
			{\pgfpoint{\mcSize}{\mcSize}}
			{
				\pgfsetcolor{\tikz@pattern@color}
				\pgfsetlinewidth{\mcThickness}
				\pgfpathmoveto{\pgfqpoint{0pt}{\mcSize}}
				\pgfpathlineto{\pgfpoint{\mcSize+\mcThickness}{-\mcThickness}}
				\pgfpathmoveto{\pgfqpoint{0pt}{0pt}}
				\pgfpathlineto{\pgfpoint{\mcSize+\mcThickness}{\mcSize+\mcThickness}}
				\pgfusepath{stroke}
		}}
		\makeatother
		\tikzset{every picture/.style={line width=0.75pt}} 
		
		\begin{tikzpicture}[x=0.6pt,y=0.6pt,yscale=-1,xscale=1]
			
			\draw  (87,218) -- (575,218)(366,51) -- (366,381) (568,213) -- (575,218) -- (568,223) (361,58) -- (366,51) -- (371,58)  ;
			\draw  [draw opacity=0][pattern=_bcqvlgi96,pattern size=18.75pt,pattern thickness=0.75pt,pattern radius=0pt, pattern color={rgb, 255:red, 221; green, 160; blue, 221}] (365,57) -- (566,57) -- (566,216) -- (365,216) -- cycle ;
			\draw  [draw opacity=0][pattern=_1fqgv8bx5,pattern size=18.75pt,pattern thickness=0.75pt,pattern radius=0pt, pattern color={rgb, 255:red, 184; green, 233; blue, 134}] (366.3,219.46) -- (90.55,58.17) -- (367.68,60.59) -- cycle ;
			\draw  [draw opacity=0][pattern=_lqcoc3w0z,pattern size=18.75pt,pattern thickness=0.75pt,pattern radius=0pt, pattern color={rgb, 255:red, 184; green, 233; blue, 134}] (365,216) -- (565,216) -- (565,371) -- (365,371) -- cycle ;
			\draw [color={rgb, 255:red, 208; green, 2; blue, 27 }  ,draw opacity=1 ][line width=1.5]    (90.55,58.17) -- (363.61,215.87) ;
			\draw  [draw opacity=0][pattern=_vhll9ywue,pattern size=18.75pt,pattern thickness=0.75pt,pattern radius=0pt, pattern color={rgb, 255:red, 250; green, 244; blue, 174}] (91,58.17) -- (366.3,219) -- (91,219) -- cycle ;
			\draw  [draw opacity=0][pattern=_vyn3uszso,pattern size=18.75pt,pattern thickness=0.75pt,pattern radius=0pt, pattern color={rgb, 255:red, 141; green, 238; blue, 238}] (91,219) -- (365,219) -- (365,378) -- (91,378) -- cycle ;
			
			\draw (374,38) node [anchor=north west][inner sep=0.75pt]  [font=\Large]  {$b$};
			\draw (561,230) node [anchor=north west][inner sep=0.75pt]  [font=\Large]  {$a$};
			\draw (420,95) node [anchor=north west][inner sep=0.75pt]    {$L_{c} =\sqrt{\frac{\lambda }{ab}} \pi $};
			\draw (256,95) node [anchor=north west][inner sep=0.75pt]  [font=\large]  {$L_{c} =+\infty $};
			\draw (432,290) node [anchor=north west][inner sep=0.75pt]  [font=\large]  {$L_{c} =+\infty $};
			\draw (89,33) node [anchor=north west][inner sep=0.75pt]    {$b=-\lambda a$};
			\draw (100,153) node [anchor=north west][inner sep=0.75pt]  [font=\scriptsize]  {$L_{c} =\sqrt{\frac{\lambda }{ab}}\coth^{-1}\frac{b-\lambda a}{2\sqrt{-\lambda ab}}$};
			\draw (145,280) node [anchor=north west][inner sep=0.75pt]  [font=\scriptsize]  {$L_{c} =\sqrt{\frac{\lambda }{ab}}\arccot\frac{b-\lambda a}{2\sqrt{\lambda ab}}$};
			
		\end{tikzpicture}
		~\\	Figure 1.1: The expression of $L_c$
	\end{center}
	\begin{rem}
	By setting $\lambda=1$ and $a=b=c>0$ in Theorem \ref{main theorem}, we obtain $L_c=\frac{\pi}{c}$. This implies that the system \eqref{coron} is stabilizable for $L\in(0,\frac{\pi}{c})$, but not stabilizable for $L\in [\frac{\pi}{c},+\infty)$. The result presented in this paper bridges the gap between the stabilizable region $(0,\frac{\pi}{2c})$ (established in \cite{CB2011}) and the unstabilizable region $[\frac{\pi}{c},+\infty)$ (demonstrated in \cite{CB2016}) for the system (\ref{coron}).
	\end{rem}
	\begin{rem}
		While $L$ is sufficiently small, we can establish a Lyapunov function to demonstrate that the system \eqref{system} \eqref{boun} is exponentially stable. For instance, if $ab>0$, $|k|<\varepsilon<1$, we define
		$$
		V(y_1,y_2)\triangleq\int_{0}^{L}\bigg(\frac{y_{1}^{2}(t,x)}{\eta(x)}+\frac{\eta(x)y_{2}^{2}(t,x)}{\lambda}\bigg)dx,
		$$
	where
		 $\eta(x)$ satisfies $\eta'(x)=(1+\varepsilon)(|a|+\frac{|b|}{\lambda}\eta^{2})$, $\eta_{\varepsilon}(0)=\varepsilon$, i.e.
		$$
		\begin{aligned}
			\eta(x)=\sqrt{\frac{\lambda a}{b}}\tan\big{(}\sqrt{\frac{ab}{\lambda}}(1+\varepsilon)x+\arctan\sqrt{\frac{b}{\lambda a}}\varepsilon\big{)}.
		\end{aligned}
		$$
		Therefore, if $L<L_{k}=
		\frac{1}{1+\varepsilon}\sqrt{\frac{\lambda}{ab}}\big{(}\arctan\sqrt{\frac{b}{\lambda a}}-\arctan\sqrt{\frac{b}{\lambda a}}\varepsilon\big{)}$, we verify that $V(y_1,y_2)$ is a Lyapunov function and the corresponding system is exponentially stable.
	\end{rem}
	\begin{rem}
		The general hyperbolic system with the rightward speed $\lambda_{1}>0$  and leftward speed $\lambda_{2}>0$
		\begin{equation}
			\label{general system}
			\begin{cases}
				\partial_{t}y_{1}+\lambda_{1}\partial_{x}y_{1}+ay_{2}=0,\quad& (t,x)\in(0,+\infty)\times(0,L),\\
				\partial_{t}y_{2}-\lambda_{2}\partial_{x}y_{2}+by_{1}=0,\quad& (t,x)\in(0,+\infty)\times(0,L),\\
				y_{2}(t,L)=y_{1}(t,L),\quad& t\in(0,+\infty),\\
				y_{1}(t,0)=ky_2(t,0),\quad& t\in(0,+\infty).
			\end{cases}
		\end{equation}
		can be reduced, through a scaling of the space variable $x \rightarrow \lambda_1 x$, to a system with rightward speed $1$ and leftward speed $\lambda_1>0$ in the form of (\ref{system}). Thus, Theorem \ref{main theorem} can be extended to the general system (\ref{general system}).
	\end{rem}
	
	According to Theorem \ref{main theorem}, the proportional feedback control (\ref{boun}) cannot stabilize the system (\ref{system}) for $L>L_c$. Therefore, an alternative control approach is worth exploring in this case. Building upon the works of Hu et al. \cite{Hu2021,Hu2019} and Holta et al. \cite{An2017}, we develop a Backstepping control combined with observer design that stabilizes the system even when $L\geq L_c$, and without the need to observe the full state. Notably, the proposed control law drives the system to its zero equilibrium in finite time. More details are presented in Section 5.

	The main contribution of this paper can be summarized in three aspects: 
	1) we provide a complete characterization of the stabilizability of the hyperbolic system \eqref{system} under proportional feedback control \eqref{boun} for all cases;
	2) we show that the stabilizability of the system exhibits a dichotomy property on the interval $L$, indicating a clear boundary between the stabilizable and non-stabilizable regions;
	3) we propose a new control method that combines backstepping control with observer design to stabilize the system when the proportional control fails.

	\par The organization of this paper is as follows. In Section 2, we provide some preliminaries including Spectral Mapping Property and Implicit Function Theorem, which will be used in the following Sections. In Section 3, we provide the proof of Theorem \ref{main theorem}.  In Section 4, we provide some numerical simulations to confirm our developed analytical criteria in Section 3. In Section 5, We give a sketch of the construction of the Backstepping control with observer design for the case of $L\geq L_c$.
	
	\textbf{Notations}. In this paper, we use standard notation and terminology in complex analysis and algebra. Specifically, $\mathbb{C}+$, $\mathbb{C}-$, and $\mathbb{C}_0$ denote the sets of complex numbers with positive real parts, negative real parts, and zero real parts, respectively. We use $\overline{\mathbb{C}+}$ to denote the set $\mathbb{C}_+\cup\mathbb{C}_0$. The sets of integers, positive integers, and non-negative integers are denoted by $\mathbb{Z}$, $\mathbb{N}^{*}$, and $\mathbb{N}$, respectively. The imaginary unit is denoted by $\rm i$ such that $\rm i=\sqrt{-1}$. For $\sigma\in\mathbb{C}$, we use ${\rm Re}\sigma$, ${\rm Im}\sigma$, ${\rm arg}\sigma$, and $|\sigma|$ to denote the real part, imaginary part, principal value of argument, and norm of $\sigma$, respectively. For an analytic function $f$, an open subset $\Omega\subseteq \mathbb{C}$ and $b\in\mathbb{C}$, ${\rm deg}(f,\Omega,b)$ denotes the number of roots of the equation $f(z)=b$, counted by multiplicty.

	\section{Preliminaries}
	Applying the results in Lichtner \cite{Lich2008}, we have the following lemmas:
	\begin{lem}\label{lem1}
		Let $S(t) (t \ge 0) $ be the $C^{0}$ semigroup on $L^{2}(0, L)$ that corresponds to the solution map of (\ref{system}) (\ref{boun}), and let $\mathscr{A}$ be the infinitesimal generator of the
		semigroup $S(t) (t \ge 0)$. Let us denote by $\sigma_{p}(\mathscr{A})$ and $\sigma(\mathscr{A})$ the point spectrum and the
		spectrum of $\mathscr{A}$, respectively. Then,
		$\sigma(\mathscr{A})=\sigma_{p}(\mathscr{A})$. Moreover, $\mathscr{A}$ has the Spectral Mapping Property (SMP), that is
		$$
		\text{(SMP)}\quad \sigma({\rm e}^{\mathscr{A}t})\backslash\{0\}=\overline{{\rm e}^{\sigma(\mathscr{A})t}}\backslash\{0\},\quad \text{for}\quad t\ge0.
		$$
		Hence (SMP) contains spectrum determined growth
		$$\omega(\mathscr{A})=s(\mathscr{A}),$$
		with  $\omega(\mathscr{A}):=\inf\{\omega\in\mathbb{R}\,|\,\exists\, M=M(\omega):\Vert S(t)\Vert\le M{\rm e}^{\omega t}, \forall\ t\ge 0\}$, $s(\mathscr{A}):=\sup\{\Re(\mu)\,|\,\mu\in\sigma(\mathscr{A})\}$
	\end{lem}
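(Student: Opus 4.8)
The plan is to identify the closed-loop system \eqref{system}--\eqref{boun} as a member of the class of linear hyperbolic systems treated in Lichtner \cite{Lich2008} and then invoke the spectral mapping theorem proved there, so that the proof reduces to verifying that the structural hypotheses of that reference are met. First I would recast the dynamics in abstract form on $X=L^2(0,L)\times L^2(0,L)$ by setting
\[
\mathscr{A}(y_1,y_2)=\big(-\partial_x y_1-a y_2,\ \lambda\partial_x y_2-b y_1\big),
\]
with domain
\[
D(\mathscr{A})=\big\{(y_1,y_2)\in H^1(0,L)^2 : y_1(0)=k y_2(0),\ y_2(L)=y_1(L)\big\},
\]
so that \eqref{system}--\eqref{boun} reads $\tfrac{d}{dt}(y_1,y_2)=\mathscr{A}(y_1,y_2)$ and $S(t)={\rm e}^{\mathscr{A}t}$. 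Well-posedness of this initial--boundary value problem guarantees that $\mathscr{A}$ generates the $C_0$ semigroup $S(t)$, so its resolvent set is nonempty.

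Next I would check the three structural hypotheses. The principal part is diagonal with constant characteristic speeds $1>0$ and $-\lambda<0$; these are nonzero and of opposite sign, so the system is strictly hyperbolic with no vanishing or tangential characteristics. The coupling coefficients $a,b$ are constants, hence the zeroth-order term is smooth and bounded. Finally, the boundary conditions are of admissible reflective type: at $x=0$ the incoming component $y_1$ is prescribed through the outgoing component $y_2$ via $y_1(0)=ky_2(0)$, while at $x=L$ the incoming component $y_2$ is prescribed through the outgoing component $y_1$ via $y_2(L)=y_1(L)$, so each incoming characteristic is a bounded linear reflection of the outgoing data. This is exactly the boundary structure required in \cite{Lich2008}.

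With these verifications, the equation $(\mu I-\mathscr{A})(y_1,y_2)=f$ forces $\partial_x y_1,\partial_x y_2\in L^2(0,L)$, so the resolvent maps $X$ boundedly into $H^1(0,L)^2$; composing with the compact embedding $H^1(0,L)\hookrightarrow L^2(0,L)$ shows that $(\mu I-\mathscr{A})^{-1}$ is compact wherever it exists. Consequently $\sigma(\mathscr{A})$ is discrete and consists entirely of eigenvalues of finite algebraic multiplicity, which gives $\sigma(\mathscr{A})=\sigma_p(\mathscr{A})$.

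It then remains to apply the spectral mapping theorem of \cite{Lich2008}, which for this class upgrades the always-valid inclusion $\overline{{\rm e}^{\sigma(\mathscr{A})t}}\setminus\{0\}\subseteq\sigma({\rm e}^{\mathscr{A}t})\setminus\{0\}$ to the full identity (SMP). The heart of that result --- and the step I expect to be the main obstacle if one were to reprove it rather than cite it --- is a uniform bound on $\|(\mu I-\mathscr{A})^{-1}\|$ along vertical lines $\Re\mu=\gamma$ as $|\Im\mu|\to\infty$, which rules out the approximate spectrum that would otherwise break the mapping theorem for general $C_0$ semigroups. Granting SMP, spectrum-determined growth follows in the standard way: since $r({\rm e}^{\mathscr{A}t})={\rm e}^{t\omega(\mathscr{A})}$ always holds while SMP forces $r({\rm e}^{\mathscr{A}t})={\rm e}^{t s(\mathscr{A})}$, one obtains $\omega(\mathscr{A})=\tfrac1t\log r({\rm e}^{\mathscr{A}t})=s(\mathscr{A})$, completing the proof.
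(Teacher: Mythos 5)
Your proposal is correct and follows essentially the same route as the paper, which proves this lemma simply by invoking Lichtner's spectral mapping theorem for linear hyperbolic systems; you additionally spell out the verification of Lichtner's hypotheses and the compact-resolvent argument for $\sigma(\mathscr{A})=\sigma_p(\mathscr{A})$, which the paper leaves implicit. No gaps.
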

	
	From Lemma \ref{lem1}, we obtain the following proposition:
	\begin{prop} \label{prop1}
		The system (\ref{system}) (\ref{boun}) is not exponentially stable if and only if $s(\mathscr{A})\ge0$.
	\end{prop}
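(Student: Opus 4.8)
The plan is to prove Proposition \ref{prop1} directly from Lemma \ref{lem1}, since the Spectral Mapping Property reduces the question of exponential stability to a condition on the spectral abscissa $s(\mathscr{A})$. First I would recall that by definition the closed-loop system (\ref{system}) (\ref{boun}) is $L^2$ exponentially stable precisely when the growth bound $\omega(\mathscr{A})$ is negative, i.e. $\omega(\mathscr{A})<0$. Indeed, if $\omega(\mathscr{A})<0$ one may pick $\alpha\in(0,-\omega(\mathscr{A}))$ and obtain from the definition of $\omega(\mathscr{A})$ a constant $M=M(\alpha)$ with $\Vert S(t)\Vert\le M{\rm e}^{-\alpha t}$, which is exactly (\ref{L2}); conversely, (\ref{L2}) forces $\omega(\mathscr{A})\le-\alpha<0$.

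Next I would invoke the spectrum-determined growth condition $\omega(\mathscr{A})=s(\mathscr{A})$ supplied by the (SMP) in Lemma \ref{lem1}. This is the crucial ingredient: for a general $C^0$-semigroup one only has $s(\mathscr{A})\le\omega(\mathscr{A})$, so that unstable spectrum implies instability but stable spectrum does not automatically yield exponential decay. The (SMP) upgrades this inequality to an equality, so that the two quantities coincide for the system at hand. Combining the two reductions, the system is exponentially stable if and only if $\omega(\mathscr{A})<0$, if and only if $s(\mathscr{A})<0$. Taking contrapositives, the system fails to be exponentially stable if and only if $s(\mathscr{A})\ge0$, which is the assertion.

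The only genuine subtlety, and the step I expect to be the main obstacle, is the direction requiring that $s(\mathscr{A})<0$ implies exponential stability; this is precisely where spectrum-determined growth is indispensable and cannot be replaced by elementary semigroup estimates. Having already established $\omega(\mathscr{A})=s(\mathscr{A})$ in Lemma \ref{lem1}, this obstacle is resolved, and the remaining arguments are the routine equivalences described above. I would therefore keep the write-up short, threading the two biconditionals $\text{stability}\iff\omega(\mathscr{A})<0\iff s(\mathscr{A})<0$ and negating.
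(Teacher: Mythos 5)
Your proposal is correct and follows essentially the same route as the paper, which derives the proposition directly from the spectrum-determined growth condition $\omega(\mathscr{A})=s(\mathscr{A})$ contained in Lemma \ref{lem1}. The paper states this as an immediate consequence without writing out the two biconditionals, so your version merely makes explicit the routine equivalence between the decay estimate \eqref{L2} and $\omega(\mathscr{A})<0$.
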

	
	\par  We will apply the analytic implicit function theorem in the proof of Lemma \ref{lemma3}. The Implicit Function Theorem from \cite{KH2002} is stated following:
	\begin{lem}\label{l1}
		Let $\mathcal{B}\subset C^n\times C^m$ be an open set, $f=(f_1,...,f_m):\mathcal{B}\to C^m$ a holomorphic mapping, and $(z_0,w_0)\in\mathcal{B}$ a point with $f(z_0,w_0)=0$ and
		$$
		\det(\frac{\partial f_\mu}{\partial z_\mu}(z_0,w_0)\Bigg|\begin{aligned}
			&\mu=1,...,m\\&\nu=n+1,...,n+m
		\end{aligned})\ne0.
		$$
		Then there is an open neighborhood $U=U'\times U''\subset \mathcal{B}$ and a holomorphic map $g:U'\to U''$ such that
		$$\{(z,w)\in U'\times U'': f(z,w)=0\}=\{(z,g(z)):z\in U'\}.
		$$
	\end{lem}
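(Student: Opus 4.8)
The plan is to reduce this analytic (holomorphic) implicit function theorem to the classical smooth implicit function theorem over $\mathbb{R}$, and then to upgrade the resulting real map to a holomorphic one by verifying the Cauchy--Riemann equations. Throughout, I write the coordinates of $\mathbb{C}^{n}\times\mathbb{C}^{m}$ as $(z,w)$ with $z\in\mathbb{C}^{n}$ and $w\in\mathbb{C}^{m}$, so the hypothesis reads $\det\big(\partial f_\mu/\partial w_\nu\,(z_0,w_0)\big)_{\mu,\nu=1}^{m}\neq 0$, i.e. the complex Jacobian of $f$ in the $w$-variables is invertible at $(z_0,w_0)$.

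First I would identify $\mathbb{C}^{n+m}$ with $\mathbb{R}^{2(n+m)}$ and regard $f$ as a real map, which is of class $C^\infty$ (indeed real-analytic) since its components are holomorphic. The only input needed from the hypothesis is that the $2m\times 2m$ real Jacobian of $f$ with respect to the real coordinates underlying $w$ is nonsingular at $(z_0,w_0)$; this is immediate from the standard fact that for a holomorphic map the real Jacobian determinant equals $\big|\det_{\mathbb{C}}(\partial f/\partial w)\big|^2$, which is strictly positive by hypothesis. Applying the real implicit function theorem then yields an open neighborhood $U=U'\times U''\subset\mathcal{B}$ of $(z_0,w_0)$ and a map $g:U'\to U''$ of class $C^1$ with $g(z_0)=w_0$ such that $\{(z,w)\in U: f(z,w)=0\}=\{(z,g(z)):z\in U'\}$. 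This already gives the existence, uniqueness, and graph description claimed; what remains is holomorphy of $g$.

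The crux is to show $g$ satisfies $\partial g_l/\partial\bar z_k\equiv 0$. A priori $g=g(z,\bar z)$ is only $C^1$, so I would differentiate the identity $f(z,g(z))\equiv 0$ using the Wirtinger calculus. Applying $\partial/\partial\bar z_k$ and the chain rule gives, for each $\mu$,
\begin{equation*}
\frac{\partial f_\mu}{\partial\bar z_k}+\sum_{l=1}^{m}\frac{\partial f_\mu}{\partial w_l}\frac{\partial g_l}{\partial\bar z_k}+\sum_{l=1}^{m}\frac{\partial f_\mu}{\partial\bar w_l}\frac{\partial\bar g_l}{\partial\bar z_k}=0 .
\end{equation*}
Since $f$ is holomorphic, $\partial f_\mu/\partial\bar z_k=0$ and $\partial f_\mu/\partial\bar w_l=0$, so the relation collapses to $\sum_{l}(\partial f_\mu/\partial w_l)(\partial g_l/\partial\bar z_k)=0$ for all $\mu,k$. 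Shrinking $U'$ if necessary so that the matrix $(\partial f_\mu/\partial w_l)$ stays invertible near $(z_0,w_0)$ (possible by continuity of the determinant), I conclude $\partial g_l/\partial\bar z_k=0$ for all $l,k$, whence $g$ is holomorphic.

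I expect the reduction step and the real implicit function theorem to be entirely routine; the only point requiring genuine care is the holomorphy verification, specifically keeping track of the anti-holomorphic terms in the Wirtinger chain rule and invoking the invertibility of the complex $w$-Jacobian to eliminate them. An alternative route, which I would record as a remark, is to apply the holomorphic inverse function theorem to $F(z,w):=(z,f(z,w))$, whose complex Jacobian at $(z_0,w_0)$ is block-triangular with invertible diagonal blocks $I_n$ and $(\partial f_\mu/\partial w_\nu)$; the last $m$ components of the local holomorphic inverse of $F$, evaluated on $\{\,\zeta=0\,\}$, then furnish $g$ directly, at the cost of first establishing the holomorphic inverse function theorem, which itself reduces to the real case plus the same Cauchy--Riemann check.
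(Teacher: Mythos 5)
Your proof is correct: the reduction to the real implicit function theorem via the identity $\det_{\mathbb R}=\vert\det_{\mathbb C}\vert^2$ for the $w$-Jacobian, followed by the Wirtinger chain-rule computation showing $\partial g_l/\partial\bar z_k=0$, is the standard argument for the holomorphic implicit function theorem (and you correctly read the index typo in the displayed hypothesis, $\partial f_\mu/\partial z_\nu$ for $\nu=n+1,\dots,n+m$, as the $w$-derivatives). The paper itself offers no proof of this lemma --- it is quoted verbatim from the cited reference as a known tool --- so there is nothing to compare against; your argument would serve as a self-contained substitute for that citation.
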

	
	\section{Proof of Theorem \ref{main theorem}}
	In this section, we give the proof of Theorem \ref{main theorem}.
	We first establish the characteristic equation.
\par	Let $\sigma\in\mathbb{C}$ be the eigenvalue of the system, we look for a nontrivial solution $(y_{1},y_{2})^{\rm T}$ of the system with the form:
	\begin{equation}
		\begin{cases}
			y_{1}(t,x)={\rm e}^{\sigma t}f(x),\quad (t,x)\in[0,+\infty)\times(0,L), \\
			y_{2}(t,x)={\rm e}^{\sigma t}g(x),\quad (t,x)\in[0,+\infty)\times(0,L).
		\end{cases}
	\end{equation}
\par	$f(x),g(x)$ are the corresponding eigenfunctions of the $y_1(t,x),y_2(t,x)$.
	Such a $(y_{1},y_{2})^{\rm T}$ is a solution of the system if and only if
	\begin{eqnarray}
		\sigma f+\partial_ xf+ag=0, \label{e3}\\
		\sigma g-\lambda \partial_ xg+bf=0, \label{e4}\\
		f(L)=g(L),\label{e5e}\\ 
		f(0)=kg(0). \label{e5}
	\end{eqnarray}
\par	From Eq.\eqref{e3} and Eq.\eqref{e4}, we have:
	$$\lambda \partial_{xx}f+(\lambda-1)\sigma\partial_{x}f+(ab-\sigma^2)f=0.$$
\par	We divide into two classes  $\sigma^2\ne\frac{4\lambda ab}{(\lambda+1)^2}$ and $\sigma^2=\frac{4\lambda ab}{(\lambda+1)^2}$.
\par	For the first case, $\sigma^2\ne\frac{4\lambda ab}{(\lambda+1)^2}$, thus all eigenvalues are single. We obtain
	\begin{equation}
		f={\rm e}^{\xi x}(A{\rm e}^{\eta x}+B{\rm e}^{-\eta x}),
	\end{equation}
	\begin{equation}
		g=-\frac{1}{a}{\rm e}^{\xi x}\big{(}A(\sigma+\xi+\eta){\rm e}^{\eta x}+B(\sigma+\xi-\eta){\rm e}^{-\eta x}\big{)},
	\end{equation}
	where 
	$$
	\xi=\frac{-(\lambda-1)}{2\lambda}\sigma,\ 
	\eta^{2}=\frac{(\lambda+1)^{2}\sigma^{2}-4\lambda ab}{4\lambda^{2}}\ne 0,\ A,B\in\mathbb{C}.
	$$
\par	Eq.\eqref{e5e} and Eq.\eqref{e5} yield
	\begin{eqnarray}
		A\big{(}a+k(\sigma+\xi+\eta)\big{)}+B\big{(}a+k(\sigma+\xi-\eta)\big{)}=0, \label{e7}\\
		A\big{(}a+\sigma+\xi+\eta\big{)}{\rm e}^{\eta L}+B\big{(}a+\sigma+\xi-\eta\big{)}{\rm e}^{-\eta L}=0. \label{e8}
	\end{eqnarray}
\par	By computing the characteristic determinant of Eq.\eqref{e7} and Eq.\eqref{e8}, we obtain that there exists $(A,B)\in\mathbb{C}^2\backslash\{(0,0)\}$ such that Eq.\eqref{e7} and Eq.\eqref{e8} hold if and only if	
	\begin{equation}
		\label{e11}
		(k-1)\cosh(\eta L)-\big{(}(k+1)\frac{\lambda+1}{2\lambda}\sigma+(\frac{kb}{\lambda}+a)\big{)}\frac{\sinh(\eta L)}{\eta}=0.
	\end{equation}
\par	For the second case, $\sigma^2=\frac{4\lambda ab}{(\lambda+1)^2}$. In this case, we have
	\begin{equation}
		f={\rm e}^{\xi x}(Ax+B),
	\end{equation}
	\begin{equation}
		g=-\frac{1}{a}{\rm e}^{\xi x}\big{(}A(\xi+\sigma)x+(A+\xi B+\sigma B)\big{)}.
	\end{equation}
\par	Then Eq.\eqref{e5e} and Eq.\eqref{e5} yield
	\begin{eqnarray}
		A\big{(}aL+\sigma L+\xi L\big{)}+B\big{(}a+\sigma+\xi\big{)}=0, \label{e71}\\
		Ak+B\big{(}a+k\sigma+k\xi\big{)}=0. \label{e81}
	\end{eqnarray}
\par	By computing the characteristic determinant of Eq.\eqref{e71}, Eq.\eqref{e81}, we obtain that there exists $(A,B)\in\mathbb{C}^2\backslash\{(0,0)\}$ such that Eq.\eqref{e71} Eq.\eqref{e81} hold if and only if	
	\begin{equation}
		\label{e11-}
		(k-1)-(k\frac{\lambda+1}{2\lambda}\sigma+a\big{)}\big{(}1+\frac{(\lambda+1)\sigma}{2\lambda a}\big{)}L=0,
	\end{equation}
which can be included in Eq. \eqref{e11-} for $\eta=0$ if we define $ \frac{\sinh(\eta L)}{\eta}\triangleq L$ for $\eta=0$.	
\par	Note that $\cosh(\eta L)$ and $\dfrac{\sinh(\eta L)}{\eta}$ are analytical functions with respect to $\eta^2$, which further implies that the left side of Eq. \eqref{e11} is analytic with respect to $\sigma$. Denote by \begin{equation}\label{e13}
		F_{a,b,\lambda,k,L}(\sigma)\triangleq (k-1)\cosh(\eta L)-\big{[}(k+1)\frac{\lambda+1}{2\lambda}\sigma+(\frac{kb}{\lambda}+a)\big{]}\frac{\sinh(\eta L)}{\eta},
	\end{equation}
	with
	$$\eta^{2}=\frac{(\lambda+1)^{2}\sigma^{2}-4\lambda ab}{4\lambda^{2}},$$
	and
	$$
	\mathcal{U}_F\triangleq\Big\{\sigma\Big|F_{a,b,\lambda,k,L}(\sigma)=0 \Big\}.
	$$
\par	The set of the roots of $F_{a,b,\lambda,k,L}(\sigma)$ satisfy
	$$\mathcal{U}_F=\sigma({\mathscr{A}})=\sigma_p({\mathscr{A}}).$$
\par	By Lemma \ref{lem1}, we obtain that a sufficient and necessary condition for the stability of the system \eqref{system} \eqref{boun}: 
	\begin{equation}
		\mathcal{U}_F\cap\C_+\cap\C_0=\emptyset.
	\end{equation}
\par	Denote by 
	$$N_{a,b,\lambda}(k,L)\triangleq {\rm deg}(F_{a,b,\lambda,k,L}(\sigma),\mathbb{C}_+,0).$$
\par	Our goal is to establish the region for parameter $a,b,\lambda,k,L$ such that  $N_{a,b,\lambda}(k,L)=0$. To acheive this, we fix $a,b,\lambda$ and figure out the stability region for $k,L$ on $\mathbb{R}\times [0,+\infty)$. We divide into two case $|k|\ge 1$ and $|k|<1$.
	\subsection{$|k|\ge 1$}
	\begin{lem} \label{|k|>1}
		For every $a,b,\lambda\in\R$ and  $L>0$, if $|k|\ge 1$, the system \eqref{system}\eqref{boun} is non-exponentially stable.
	\end{lem}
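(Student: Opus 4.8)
The plan is to verify the criterion of Proposition~\ref{prop1}, i.e. to prove that $s(\mathscr{A})\ge 0$ whenever $|k|\ge 1$. Since $\mathcal{U}_F=\sigma(\mathscr{A})=\sigma_p(\mathscr{A})$, it suffices to exhibit a sequence of roots $\{\tilde\sigma_n\}$ of $F_{a,b,\lambda,k,L}$ whose real parts converge to a nonnegative limit. Writing $\mu:=\frac{\lambda+1}{2\lambda}>0$, I expect the relevant limit to be $\delta:=\frac{\ln|k|}{2\mu L}\ge 0$, the inequality being exactly the place where the hypothesis $|k|\ge 1$ enters.

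First I would carry out an asymptotic reduction of the characteristic function as $|\sigma|\to\infty$. From $\eta^{2}=\frac{(\lambda+1)^{2}\sigma^{2}-4\lambda ab}{4\lambda^{2}}$ one gets $\eta=\mu\sigma+O(1/\sigma)$ (the branch of $\eta$ is irrelevant, since $\cosh(\eta L)$ and $\frac{\sinh(\eta L)}{\eta}$ depend only on $\eta^{2}$). Substituting this and discarding the lower-order contribution of the bracket $\frac{kb}{\lambda}+a$, the leading part of $F_{a,b,\lambda,k,L}$ becomes $(k-1)\cosh(\mu\sigma L)-(k+1)\sinh(\mu\sigma L)$, which simplifies to
\[
F_\infty(\sigma):=k\,{\rm e}^{-\mu\sigma L}-{\rm e}^{\mu\sigma L}.
\]
The zeros of $F_\infty$ solve ${\rm e}^{2\mu\sigma L}=k$, hence form the sequence $\sigma_n=\frac{\ln|k|+{\rm i}\,\theta_n}{2\mu L}$ (with $\theta_n=2n\pi$ if $k>0$ and $\theta_n=(2n+1)\pi$ if $k<0$), all lying on the vertical line $\Re\sigma=\delta\ge 0$ with $|{\rm Im}\,\sigma_n|\to\infty$.

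Next I would transfer these zeros to the exact characteristic function by Rouch\'e's Theorem. Around each $\sigma_n$ I take a circle $C_n=\partial D(\sigma_n,r_n)$ with radius $r_n\sim c/|\sigma_n|\to 0$. Two estimates are needed on $C_n$: an upper bound $|F_{a,b,\lambda,k,L}-F_\infty|=O(1/|\sigma_n|)$, coming from $\eta L-\mu\sigma L=O(1/\sigma)$ together with the boundedness of $\cosh(\mu\sigma L)$ and $\sinh(\mu\sigma L)$ on the line $\Re\sigma=\delta$; and a lower bound $\min_{C_n}|F_\infty|\gtrsim |F_\infty'(\sigma_n)|\,r_n$, valid because $\sigma_n$ is a simple zero with $|F_\infty'(\sigma_n)|\sim 2\mu L\sqrt{|k|}$ bounded away from $0$. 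Choosing $c$ large enough makes $|F_{a,b,\lambda,k,L}-F_\infty|<|F_\infty|$ on $C_n$ for all large $n$, so Rouch\'e yields a genuine root $\tilde\sigma_n\in D(\sigma_n,r_n)$ of $F_{a,b,\lambda,k,L}$. Since $|\Re\tilde\sigma_n-\delta|\le r_n\to 0$, I conclude $s(\mathscr{A})\ge\sup_n\Re\tilde\sigma_n\ge\delta\ge 0$, and Proposition~\ref{prop1} gives the non-exponential stability.

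The main obstacle is this Rouch\'e step: one must make the two competing estimates genuinely uniform in $n$ while simultaneously letting $r_n\to0$. The delicate point is that along the vertical line $\Re\sigma=\delta$ the functions $\cosh(\mu\sigma L)$ and $\sinh(\mu\sigma L)$ oscillate rather than grow, so $|F_\infty|$ repeatedly returns to small values; one must check that these small values occur only at the $\sigma_n$ themselves and that, at distance $r_n$ from them, $|F_\infty|$ still dominates the $O(1/|\sigma_n|)$ error. The borderline case $|k|=1$ (where $\delta=0$) is precisely where this care pays off: the argument then only places the real parts near $0$, but the shrinking radii $r_n\to0$ still force $s(\mathscr{A})\ge 0$.
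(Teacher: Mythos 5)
Your proposal is correct, and for $|k|>1$ it is essentially the paper's argument: approximate the characteristic function at infinity by the exponential polynomial obtained by dropping the $O(1/\sigma)$ terms, locate its zeros on the vertical line $\Re\sigma=\frac{\ln|k|}{2\mu L}$, and transfer them to the true characteristic function by Rouch\'e. Where you genuinely diverge is at the borderline $k=\pm1$. The paper does not run Rouch\'e there at all: for $k=1$ the characteristic equation \eqref{e11} factors exactly as $\big((\lambda+1)\sigma+(b+\lambda a)\big)({\rm e}^{2\eta L}-1)=0$, producing explicit eigenvalues $\sigma_n=\frac{2\lambda}{\lambda+1}\sqrt{\frac{ab}{\lambda}-\frac{n^2\pi^2}{L^2}}$ that sit \emph{on} the imaginary axis for large $n$; for $k=-1$ the restriction to $\sigma={\rm i}\beta$ reduces to the real transcendental equation $\cot(\alpha L)=\frac{a-b/\lambda}{2}\alpha$, which has infinitely many real solutions, again giving purely imaginary eigenvalues. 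Your alternative — shrinking Rouch\'e disks of radius $r_n\sim c/|\sigma_n|$ so that the true roots are forced to approach the line $\Re\sigma=\delta$ even when $\delta=0$ — is sound: the error is uniformly $O(1/|\sigma_n|)$ on the relevant strip, the limit zeros are simple with $|F_\infty'(\sigma_n)|=2\mu L\sqrt{|k|}$ bounded below, and $s(\mathscr{A})\ge\limsup_n\Re\tilde\sigma_n=\delta\ge0$ suffices for Proposition \ref{prop1} since $s(\mathscr{A})$ is a supremum that need not be attained. The trade-off is that your route is unified and avoids case-splitting, at the price of the more delicate quantitative Rouch\'e estimate you identify; the paper's route gives exact imaginary-axis spectrum in the marginal cases with almost no analysis, which is also what feeds the later computation of $\mathcal{A}_{a,b,\lambda}$.
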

	\begin{proof}[{\bf Proof of Lemma \ref{|k|>1}}]
		Obviously, by Proposition \ref{prop1} we just need to prove	
		\begin{center}
			$\forall L\in\mathbb{R}, |k|\ge 1$, the system (\ref{system}) (\ref{boun}) has $s(\mathscr{A})\ge0$.
		\end{center}
\par		We analyze the solution for the characteristic equation (\ref{e11}) in various cases:
		~\\ \textbf{Case\quad $k=1$}
	\par While $k=1$, Eq.\eqref{e11} yields the following equation with  $\eta^{2}=\frac{(\lambda+1)^{2}\sigma^{2}-4\lambda ab}{4\lambda^{2}}:$
		\begin{equation}\label{ch1}
			\big{(}(\lambda+1)\sigma+(b+\lambda a)\big{)}({\rm e}^{2\eta L}-1)=0.
		\end{equation} 
\par The solutions of Eq.\eqref{ch1} include $$\sigma_{n}=\frac{2\lambda}{(\lambda+1)}\sqrt{\frac{ab}{\lambda}-\frac{n^{2}\pi^{2}}{L^{2}}}\quad(n\in\mathbb{N}),$$
		which are located on the imagniary axis for sufficiently large $n\in \mathbb{N}^*$.
		
		\noindent\textbf{Case\quad $k=-1$}
	\par While $k=-1$, Eq.\eqref{e11} yields the following equation with  $\eta^{2}=\frac{(\lambda+1)^{2}\sigma^{2}-4\lambda ab}{4\lambda^{2}}:$
		$$-2\eta({\rm e}^{2\eta L}+1)=(a-\frac{b}{\lambda})({\rm e}^{2\eta L}-1).$$ 
	\par	Considering the solutions on the imaginary axis $\eta=\alpha {\rm i}(\alpha\ne0,\alpha\in\mathbb{R})$, we obtain 
		$$\cot \alpha L=\frac{a-\frac{b}{\lambda}}{2}\alpha,$$ which has infinitely solutions. This implies that  $F_{a,b,\lambda,k,L}(\sigma)$ has infinitely many roots on the imaginary axis.
		
		~\\ \textbf{Case\quad $|k|>1$}
		
		Multiplying $2{\rm e}^{\eta L}$ by two sides of Eq.\eqref{e11}, we obtain:
		$$
		H(\sigma)\triangleq(k-1)(1+{\rm e}^{-2Q(\sigma)L})-\bigg[(k+1)\frac{\lambda+1}{2\lambda}\frac{\sigma}{Q(\sigma)}+\frac{kb+\lambda a}{Q(\sigma)}\bigg](1-{\rm e}^{-2Q(\sigma)L})=0,
		$$
		where $Q(\sigma)$ denotes a single-valued branch that satisfied  $4\lambda^2Q(\sigma)^2=(\lambda+1)^{2}\sigma^{2}-4\lambda ab$, and
		$$
		\lim\limits_{|\sigma|\rightarrow+\infty}\frac{Q(\sigma)}{\sigma}=\frac{\lambda+1}{2\lambda}.
		$$
		\par This enlightens us to estimate $H(\sigma)$ as
		$$
		G(\sigma)\triangleq(k-1)(1+{\rm e}^{-\frac{(\lambda+1)\sigma}{\lambda}L})-(k+1)(1-{\rm e}^{-\frac{(\lambda+1)\sigma}{\lambda}L}),
		$$
		with $|\sigma|\rightarrow+\infty$, which vanishes at infinitely many points in $\C_+$ reading
		$$
		\begin{aligned}\
			\sigma_{k,n}=\frac{\lambda}{(\lambda+1)L}(\ln|k|+2\hat{n}\pi i),\ \hat{n}=
			\begin{cases}
				n,\quad k\in(1,+\infty),\\
				n+\frac{1}{2},\quad k\in(-\infty,-1).
			\end{cases}
		\end{aligned}
		$$
		\par Now we estimate $G(\sigma)-H(\sigma)$ as $|\sigma|\rightarrow+\infty, \sigma\in\C_+$.
		$$
		\begin{aligned}
			&G(\sigma)-H(\sigma)\\
			=&2k\big({\rm e}^{-\frac{(\lambda+1)\sigma}{\lambda}L}-{\rm e}^{-2Q(\sigma)L}\big)+\frac{kb+\lambda a}{Q(\sigma)}(1-{\rm e}^{-2Q(\sigma)L})+(k+1)\bigg(\frac{\lambda+1}{2\lambda}\frac{\sigma}{Q(\sigma)}-1\bigg)(1-{\rm e}^{-2Q(\sigma)L}).
		\end{aligned}
		$$
		\par Notice that:
		$$
		\lim\limits_{|\sigma|\rightarrow+\infty, \sigma\in\C_+}\Big|2Q(\sigma)-\frac{(\lambda+1)\sigma}{\lambda}\Big|L=\Big|\frac{4ab}{2\lambda Q(\sigma)+(\lambda+1)\sigma}\Big|L=0.	
		$$
	\par	We obtain
		$$
		\limsup\limits_{|\sigma|\rightarrow+\infty, \sigma\in\C_+}\big|{\rm e}^{-\frac{(\lambda+1)\sigma}{\lambda}L}-{\rm e}^{-2Q(\sigma)L}\big|=\big|{\rm e}^{-\frac{(\lambda+1)\sigma}{\lambda}L}\big|\cdot\big|1-{\rm e}^{\big(-2Q(\sigma)+\frac{(\lambda+1)\sigma}{\lambda}\big)L}\big|=0.
		$$
	\par	Furthermore,
		$$
		|{\rm e}^{-2Q(\sigma)L}|={\rm e}^{-2{\rm Re}(Q(\sigma))L}=({\rm e}^{-2{\rm Re}(\sigma)L})^{\frac{{\rm Re}(Q(\sigma))}{{\rm Re}(\sigma)}},
		$$
		which implies that
		$$
		\limsup\limits_{|\sigma|\rightarrow+\infty, \sigma\in\C_+}|{\rm e}^{-2Q(\sigma)L}|\le 1.
		$$
	\par	Then we have
		\begin{equation}\label{e12}
			\lim\limits_{|\sigma|\rightarrow+\infty, \sigma\in\C_+}|G(\sigma)-H(\sigma)|=0.
		\end{equation}
\par		We apply Rouch\'{e}'s Theorem for holmotopic functions to show that $H(\sigma)$ has infinitely roots in $\mathbb{C}_+$.
\par		At $\sigma_{k,n}$, let $\varepsilon>0$ small enough such that
		$$\Omega^{\varepsilon}_{k,n}\triangleq\big\{\sigma\in\mathbb{C}\big||G(\sigma)|<\varepsilon,|\sigma-\sigma_{k,n}|<1\big\}\subset\mathbb{C}_+.$$
\par		Notice that $\Omega^{\varepsilon}_{k,n}$ is a bounded open set of $\mathbb{C}_+$ and
		$\deg(G(\sigma),\Omega^{\varepsilon}_{k,n},0)=1$.
\par If $\varepsilon>0$ is small enough, we have 
		$$
		\partial\Omega^{\varepsilon}_{k,n}\subset\big\{\sigma\in\mathbb{C}\big||G(\sigma)|=\varepsilon\big\},\ \ 
		\Omega^{\varepsilon}_{k,n}\subset\mathbb{C}_+,\quad\forall n\in\mathbb{Z}.
		$$
\par	For all $\sigma\in\partial\Omega^{\varepsilon}_{k,n}$, from Eq.\eqref{e12}, there exists $N>0$, such that for all $n>N$, we have
		$$
		|G(\sigma)-H(\sigma)|<\varepsilon=|G(\sigma)|,\quad \forall \sigma\in\partial\Omega_{k,n}^{\varepsilon}.
		$$
\par		Applying Rouch\'{e}'s Theorem, we obtain
		$$\deg(H(\sigma),\Omega^{\varepsilon}_{k,n},0)=\deg(G(\sigma),\Omega^{\varepsilon}_{k,n},0)=1,\quad \forall n>N. $$
\par		Thus, there exists $N$ such that while $n>N$, $F(\sigma)$ vanishes at $\hat{\sigma}_{k,n}\in\Omega^{\varepsilon}_{k,n}\subset\mathbb{C}_{+}$. For $|k|>1$, the spectrum of the solution $s(\mathscr{A})$ satisfies $s(\mathscr{A})>0$.
\par These analysis for three cases complete the proof of Lemma \ref{|k|>1}.
	\end{proof}
	\subsection{$|k|<1$}
	From Lemma \ref{|k|>1}, only we choose $|k|<1$ can the system be exponentially stable. We then prove the main theorem by spectral analysis.
	
	Denote by
	$$
	\mathcal{A}_{a,b,\lambda}\triangleq\big\{(k,L)||k|<1,L\ge 0,\ \text{there exists}\ \beta\in\R\ \text{such that}\ F_{a,b,\lambda,k,L}(\mathrm{i}\beta)=0\big\}.
	$$
	\begin{lem}\label{lemma2}
		For any continuous path $(k(t),L(t))_{t\in[0,1]}\subset(-1,1)\times[0,+\infty)$, if
		$$\big\{(k(t),L(t))|t\in[0,1]\big\}\cap\mathcal{A}_{a,b,\lambda}=\emptyset,$$ 
		then $N_{a,b,\lambda}(k(t),L(t))$ is a constant on $[0,1]$.
	\end{lem}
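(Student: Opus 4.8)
The plan is to prove the lemma by combining the argument principle with the homotopy (continuity) invariance of an integer-valued winding number. Since $N_{a,b,\lambda}(k,L)={\rm deg}(F_{a,b,\lambda,k,L},\C_+,0)$ counts the zeros of the entire function $F$ in the open right half-plane, this number can change along the path only if a zero crosses the boundary $\C_0$ or escapes to/through the point at infinity. The hypothesis $\{(k(t),L(t))\}\cap\mathcal{A}_{a,b,\lambda}=\emptyset$ is exactly the statement that no zero ever sits on the imaginary axis, so it rules out boundary crossings; the real work is to rule out escape at infinity, uniformly along the path.

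\textbf{Step 1 (a uniform a priori bound at infinity).} Because $[0,1]$ is compact and $(k(\cdot),L(\cdot))$ is continuous, its image $K\subset(-1,1)\times[0,+\infty)$ is compact; in particular $|k(t)|\le\kappa<1$ and $L(t)\le L_{\max}$ for all $t$. I would show there is $R>0$, depending only on $a,b,\lambda$ and $K$, such that for every $(k,L)\in K$ the function $F_{a,b,\lambda,k,L}$ has no zero in $\overline{\C_+}\cap\{|\sigma|\ge R\}$. For this I reuse the auxiliary functions $H$ and $G$ from the proof of Lemma \ref{|k|>1}: since $H$ and $F$ differ by the nonvanishing factor $2{\rm e}^{-Q(\sigma)L}$, they share the same zeros away from the finitely many, bounded branch points of $Q$. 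For $|k|<1$ one simplifies $G(\sigma)=-2+2k\,{\rm e}^{-\frac{(\lambda+1)\sigma}{\lambda}L}$, so that on $\overline{\C_+}$, where $|{\rm e}^{-\frac{(\lambda+1)\sigma}{\lambda}L}|\le 1$, we get the uniform lower bound $|G(\sigma)|\ge 2-2|k|\ge 2(1-\kappa)>0$. Feeding this into the estimate $|G(\sigma)-H(\sigma)|\to 0$ as $|\sigma|\to+\infty$ in $\C_+$ from \eqref{e12} — which, on inspecting its derivation, is uniform in $(k,L)\in K$ and remains valid down to $L=0$ — yields $|H(\sigma)|\ge 1-\kappa$ for $|\sigma|\ge R$, hence $F\ne 0$ there, uniformly over $K$. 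I expect this step to be the main obstacle, precisely because one must make the ``$\to 0$'' estimate uniform in the parameters and handle the branch of $Q$ and the degenerate endpoint $L=0$ (where $F\equiv k-1\ne 0$ trivially has no zeros).

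\textbf{Steps 2 and 3 (argument principle and conclusion).} With $R$ fixed as above, let $\Gamma_R$ be the positively oriented boundary of $D_R\triangleq\C_+\cap\{|\sigma|<R\}$, i.e. the segment $[-{\rm i}R,{\rm i}R]$ together with the right semicircular arc of radius $R$. By Step 1 all zeros of $F_{a,b,\lambda,k(t),L(t)}$ in $\C_+$ lie in $D_R$ and none lies on the arc, while the path-avoidance hypothesis gives no zeros on the whole imaginary axis, hence none on $[-{\rm i}R,{\rm i}R]$; thus $F$ is nonvanishing on $\Gamma_R$ and the argument principle gives
$$
N_{a,b,\lambda}(k(t),L(t))=\frac{1}{2\pi{\rm i}}\oint_{\Gamma_R}\frac{\partial_\sigma F_{a,b,\lambda,k(t),L(t)}(\sigma)}{F_{a,b,\lambda,k(t),L(t)}(\sigma)}\,{\rm d}\sigma,\qquad t\in[0,1].
$$
Finally, the set $\{(t,\sigma):t\in[0,1],\ \sigma\in\Gamma_R\}$ is compact and $F$ is continuous and nowhere zero on it, so it is bounded away from $0$; consequently the integrand is continuous and uniformly bounded, the integral depends continuously on $t$, and being integer-valued on the connected interval $[0,1]$ it must be constant. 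This gives the claim.
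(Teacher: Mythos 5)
Your proposal is correct and follows essentially the same route as the paper: a uniform bound $R$ excluding zeros of $F$ from $\overline{\C_+}\cap\{|\sigma|\ge R\}$ (obtained, as in the paper, by bounding the dominant part of the characteristic function below by $2(1-|k|)$ and showing the remainder vanishes uniformly at infinity), followed by the argument principle on the boundary of $\C_+\cap\{|\sigma|<R\}$ and the observation that a continuous integer-valued function on $[0,1]$ is constant. The only cosmetic difference is that you phrase the large-$|\sigma|$ estimate through the auxiliary functions $G$ and $H$ from the proof of Lemma \ref{|k|>1} and restrict to the compact image of the path, whereas the paper works directly with Eq.~\eqref{chara-2} uniformly in $L\ge 0$ and $|k|<1-\delta$; the content is the same.
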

	\begin{proof}[{\bf Proof of Lemma \ref{lemma2}}]Since $k(t)$ is continuous with respect to $t$, there exists $\delta>0$ such that $|k(t)|<1-\delta$.
		First, we prove that for $L\geq 0$ and $|k|<1-\delta$, there exists $R>0$, such that 
		\begin{equation}\label{e9}
			(\mathcal{U}_F\cap\C_+)\subset\{\sigma||\sigma|\le R\}.
		\end{equation} 
\par	Recall the roots of $F_{a,b,\lambda,k,L}(\sigma)$ satisfy
		\begin{equation}\label{chara-1}
			(k-1)(1+{\rm e}^{-2Q(\sigma) L})-\bigg{[}(k+1)\frac{\lambda+1}{2\lambda}\frac{\sigma}{Q(\sigma)}+\frac{(\frac{kb}{\lambda}+a)}{Q(\sigma)}\bigg{]}(1-{\rm e}^{-2Q(\sigma) L})=0.
		\end{equation}
\par	Here $Q(\sigma)$ is the single-valued branch that satisfied  $4\lambda^2Q(\sigma)^2=(\lambda+1)^{2}\sigma^{2}-4\lambda ab$, and 
		\begin{equation}\label{e1}
			\lim\limits_{|\sigma|\rightarrow+\infty}\frac{\sigma}{Q(\sigma)}=\frac{2\lambda}{\lambda+1},\quad\lim\limits_{|\sigma|\rightarrow+\infty}|Q(\sigma)|=+\infty.
		\end{equation}
\par		Eq.\eqref{chara-1} is equivilent to
		\begin{equation}\label{chara-2}
			(k-1)(1+{\rm e}^{-2Q(\sigma) L})-(k+1)\frac{\lambda+1}{2\lambda}\frac{\sigma}{Q(\sigma)}(1-{\rm e}^{-2Q(\sigma) L})=\frac{(\frac{kb}{\lambda}+a)}{Q(\sigma)}(1-{\rm e}^{-2Q(\sigma) L}).
		\end{equation}
\par		Note that 
		$$
		|{\rm e}^{-2Q(\sigma)L}|={\rm e}^{-2{{\rm Re}Q(\sigma)}L}
		=({\rm e}^{-2{\rm Re}\sigma L})^\frac{{\rm Re}Q(\sigma)}{{\rm Re}\sigma},$$
		which implies that
		$$
		\limsup_{|\sigma|\to+\infty,\sigma\in\cup\overline{\mathbb{C}_+}} |{\rm e}^{-2Q(\sigma)L}|\leq 1.$$
\par	As $|\sigma|\to+\infty,\sigma\in\overline{\mathbb{C}_+}$,  the right side of Eq.\eqref{chara-2} can be estimated as
		\begin{equation}\label{e2}
			\lim\limits_{|\sigma|\to+\infty,\sigma\in\cup\overline{\mathbb{C}_+}}\frac{(\frac{kb}{\lambda}+a)}{Q(\sigma)}(1-{\rm e}^{-2Q(\sigma) L})=0.
		\end{equation}
\par	The left side of Eq.\eqref{chara-2} can be estimated as
		\begin{equation}\label{e6}
			\begin{aligned}
				&\liminf_{|\sigma|\to+\infty,\sigma\in\cup\overline{\mathbb{C}_+}}\bigg|(k-1)(1+{\rm e}^{-2Q(\sigma) L})-(k+1)\frac{\lambda+1}{2\lambda}\frac{\sigma}{Q(\sigma)}(1-{\rm e}^{-2Q(\sigma) L})\bigg|\\
				\geq&\liminf_{|\sigma|\to+\infty,\sigma\in\cup\overline{\mathbb{C}_+}}\Big|(k-1)-(k+1)\frac{\lambda+1}{2\lambda}\frac{\sigma}{Q(\sigma)}\Big|-\Big|(k-1)+(k+1)\frac{\lambda+1}{2\lambda}\frac{\sigma}{Q(\sigma)}\Big|\cdot\Big|{\rm e}^{-2Q(\sigma) L}\Big|\\
				\ge&2-2|k|\geq\delta.
			\end{aligned}
		\end{equation}
\par		The limit in Eq.\eqref{e2} and Eq.\eqref{e6} are taken uniformly with respect to $L\geq 0, |k|<1-\delta$. Therefore, there exists a sufficently large $R>0$ such that Eq.\eqref{chara-1} has no roots located in $\sigma\in\overline{\mathbb{C}_+}\cap\{\sigma||\sigma|\geq R\}.$ Then we establish Eq.\eqref{e9}.
		Eq.\eqref{e9} yields
		$$
		N_{a,b,\lambda}(k(t),L(t))=\deg(F_{a,b,\lambda,k(t),L(t)}(\sigma),\C_+,0)=\deg(F_{a,b,\lambda,k(t),L(t)}(\sigma),\C_+\cap\{\sigma||\sigma|<R\},0).
		$$
\par	 We denote the contour $C\triangleq\{R{\rm e}^{{\rm i}\theta}|\theta:-\frac{\pi}{2}\to\frac{\pi}{2}\}\cup\{\mathrm{i}\beta| \beta:R\to-R\}$. If $\{k(t), L(t)\}\cap\mathcal{A}_{a,b,\lambda}=\emptyset$, using argument principle, we obtain
		\begin{equation}\label{arg}
			N_{a,b,\lambda}(k(t),L(t))=\frac{1}{2\pi\mathrm{i}}\int_{C}\frac{F'_{a,b,\lambda,k(t),L(t)}(\sigma)}{F_{a,b,\lambda,k(t),L(t)}(\sigma)}{\rm d}\sigma.
		\end{equation}
\par		Since $F_{a,b,\lambda,k(t),L(t)}(\sigma)\ne 0$ on $C$, the right side of Eq.\eqref{arg} is continuous with respect to $t$. Furthermore, since $N_{a,b,\lambda}(k(t),L(t))$ is an integer, we know that $N_{a,b,\lambda}(k(t),L(t))$ is a constant due to the discontiuity between two different integers.
	\end{proof}
	As shown in Fig. \ref{figAA}, $\mathcal{A}_{a,b,\lambda}$ divides the $k-L$ plane into several blocks.
	\begin{figure}[H]
		\centering
		\subfigure[]{
			\includegraphics[width=0.45\textwidth]{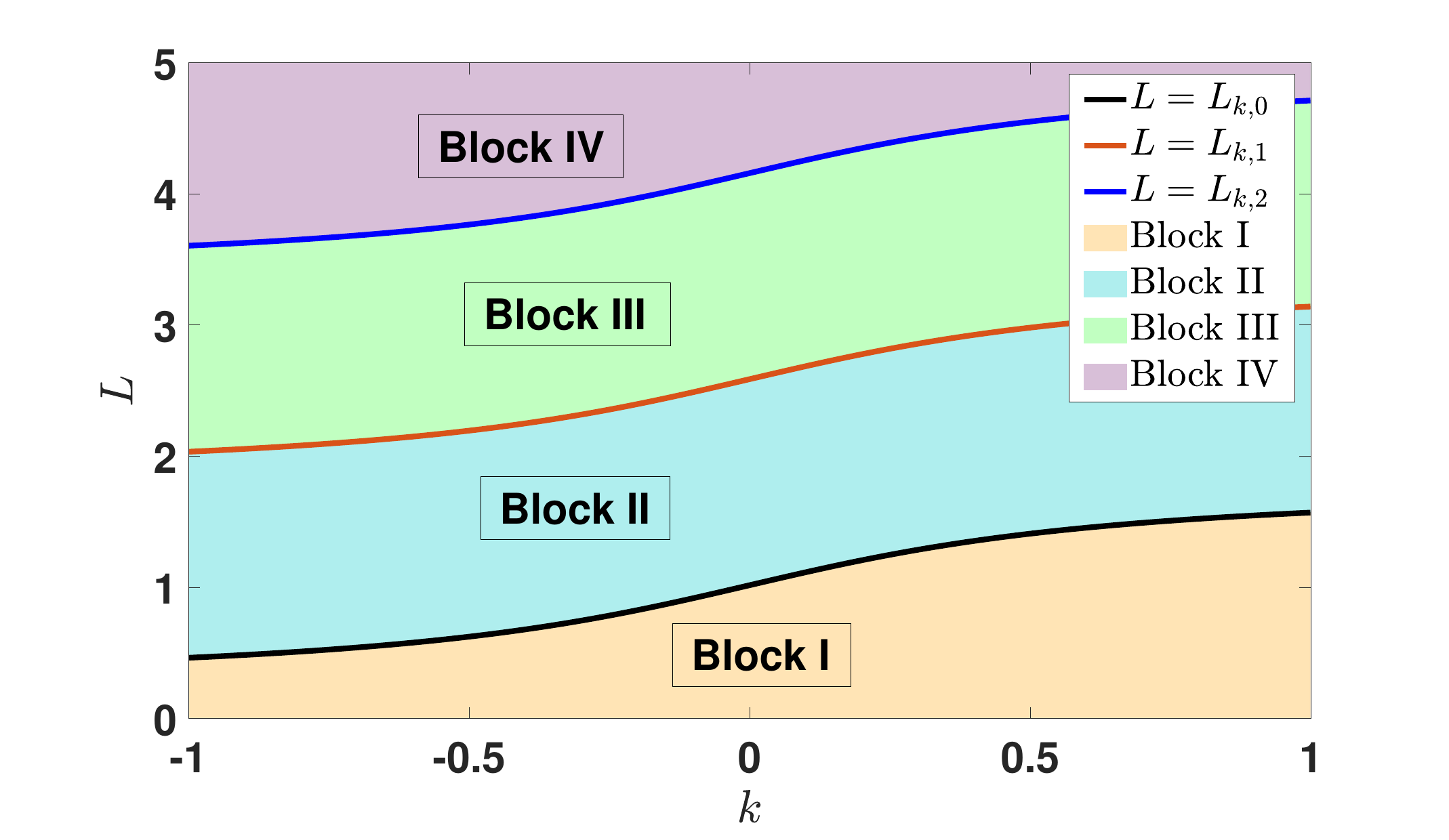}
			\label{figB111}
		}
		\centering
		\subfigure[]{
			\includegraphics[width=0.45\textwidth]{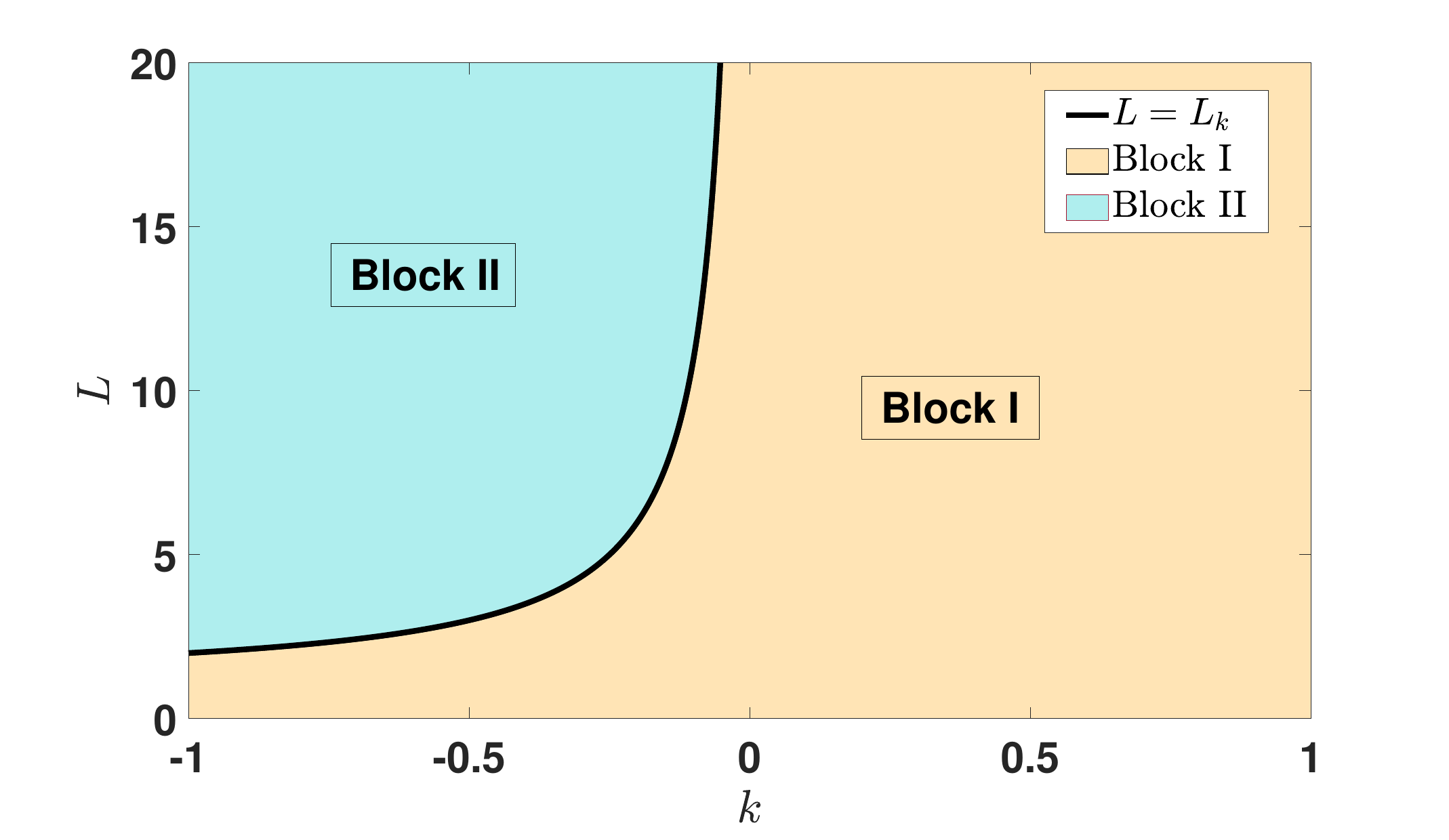}
			\label{figB222}
		}
		\centering
		\caption{$k-L$ plane is seperated by marginal curves determined by $\mathcal{A}_{a,b,\lambda}$ for different cases.}\label{figAA}
	\end{figure}
	The calculation of $\mathcal{A}_{a,b,\lambda}$ is provided in Appendix \ref{appendixa}. It is worth noting that $N_{a,b,\lambda}$ is a constant within each block as per Lemma \ref{lemma2}. Moreover, for $L=0$, we have $F_{a,b,\lambda,k,L}(\sigma)=k-1\ne0$ for $k\in(-1,1)$, which implies that $N_{a,b,\lambda}(k,0)=0$. As a result, $N_{a,b,\lambda}({\rm Block~I})=0$, and the corresponding system \eqref{system} \eqref{boun} with $(k,L)$ in Block I is exponentially stable.
\par	We can further demonstrate that if a point $(k,L)$ moves from one block to a block above it, then $N_{a,b,\lambda}$ increases by $1$. Therefore, for any point $(k,L)$ in a block other than Block I, the system \eqref{system} \eqref{boun} possesses at least one eigenvalue in $\overline{\mathbb{C}}_+$ and cannot be exponentially stable.
	
	\begin{lem}\label{lemma3}
		For $(k_0,L_0)\in\mathcal{A}_{a,b,\lambda}$, there exists $\varepsilon>0$, such that
		$$
		N_{a,b,\lambda}(k_0,L_0+\varepsilon)=N_{a,b,\lambda}(k_0,L_0-\varepsilon)+1.
		$$
	\end{lem}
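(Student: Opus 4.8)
The plan is to track the single eigenvalue that lies on the imaginary axis at $(k_0,L_0)$ and to show that it crosses from $\mathbb{C}_-$ into $\mathbb{C}_+$, transversally and in the direction of increasing $L$. The starting observation is that, for $|k|<1$, an imaginary-axis root can occur only at the origin. Indeed, writing $\sigma=\mathrm{i}\beta$ one has $\eta^2=\frac{-(\lambda+1)^2\beta^2-4\lambda ab}{4\lambda^2}\in\mathbb{R}$, so $\eta$ is real or purely imaginary and both $\cosh(\eta L)$ and $\frac{\sinh(\eta L)}{\eta}$ are real; the vanishing of $\mathrm{Im}\,F_{a,b,\lambda,k,L}(\mathrm{i}\beta)=-(k+1)\frac{\lambda+1}{2\lambda}\beta\frac{\sinh(\eta L)}{\eta}$ then forces, since $k\neq-1$, either $\beta=0$ or $\frac{\sinh(\eta L)}{\eta}=0$, and in the latter case $\cosh(\eta L)=\pm1$ so the real part reduces to $\pm(k-1)\neq0$. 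Hence $(k_0,L_0)\in\mathcal{A}_{a,b,\lambda}$ is equivalent to $F_{a,b,\lambda,k_0,L_0}(0)=0$, and the crossing is carried by a \emph{single real} root at $\sigma=0$ rather than a conjugate pair; this is exactly why the jump is $+1$.

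First I would check simplicity of this root. Since $\cosh(\eta L)$ and $\frac{\sinh(\eta L)}{\eta}$ are functions of $\eta^2$, hence even in $\sigma$, their $\sigma$-derivatives vanish at $\sigma=0$, so
\[
\partial_\sigma F_{a,b,\lambda,k_0,L}(0)=-(k_0+1)\frac{\lambda+1}{2\lambda}\,\frac{\sinh(\eta_0 L)}{\eta_0},\qquad \eta_0^2:=-\frac{ab}{\lambda}.
\]
The prefactor is nonzero because $k_0>-1$, and $\frac{\sinh(\eta_0 L_0)}{\eta_0}\neq0$: for real $\eta_0\neq0$ this is clear since $L_0>0$, while for $\eta_0=\mathrm{i}\mu$ a vanishing $\frac{\sin(\mu L_0)}{\mu}$ would again force $(k_0-1)\cos(\mu L_0)=0$, impossible for $|k_0|<1$. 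Thus $\partial_\sigma F(0)\neq0$, and the analytic Implicit Function Theorem (Lemma \ref{l1}), applied to $F_{a,b,\lambda,k_0,L}(\sigma)=0$ with $\sigma$ dependent and $L$ the parameter, yields a unique holomorphic branch $\sigma(L)$ near $L_0$ with $\sigma(L_0)=0$. Because $F$ has real power-series coefficients in $\sigma$, $\overline{\sigma(L)}$ is also a root for real $L$, so by uniqueness $\sigma(L)\in\mathbb{R}$ near $L_0$.

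The main point is the sign of the crossing velocity $\sigma'(L_0)=-\partial_L F/\partial_\sigma F\big|_{(0,L_0)}$. Using $\partial_L F(0)=(k_0-1)\eta_0\sinh(\eta_0 L_0)-(\tfrac{k_0 b}{\lambda}+a)\cosh(\eta_0 L_0)$, eliminating $\tfrac{k_0 b}{\lambda}+a$ via the marginal relation $(k_0-1)\cosh(\eta_0 L_0)=(\tfrac{k_0 b}{\lambda}+a)\frac{\sinh(\eta_0 L_0)}{\eta_0}$, and invoking $\sinh^2-\cosh^2=-1$, I expect everything to collapse to
\[
\sigma'(L_0)=\frac{2\lambda(1-k_0)}{(k_0+1)(\lambda+1)}\cdot\frac{\eta_0^2}{\sinh^2(\eta_0 L_0)}.
\]
Since $k_0\in(-1,1)$ and $\lambda>0$ the prefactor is positive, and $\frac{\eta_0^2}{\sinh^2(\eta_0 L_0)}>0$ in both cases $\eta_0$ real ($ab<0$) and $\eta_0=\mathrm{i}\mu$ ($ab>0$, where it equals $\frac{\mu^2}{\sin^2(\mu L_0)}$), with limiting value $1/L_0^2$ when $ab=0$. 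Hence $\sigma'(L_0)>0$: the real root moves strictly rightward as $L$ increases. This transversality computation is the only genuine obstacle; its cleanliness hinges on using the defining equation of $\mathcal{A}_{a,b,\lambda}$ to remove the constant term before differentiating.

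Finally I would localize. Fix $R>0$ as in the proof of Lemma \ref{lemma2} so that all roots in $\overline{\mathbb{C}_+}$ lie in $\{|\sigma|<R\}$ uniformly for $L$ near $L_0$, and choose a small disc $D$ about $\sigma=0$ in which, by the Implicit Function Theorem, $\sigma(L)$ is the only root for $|L-L_0|<\varepsilon$. Shrinking $\varepsilon$, no other root touches the imaginary axis for $0<|L-L_0|<\varepsilon$ (the only imaginary-axis root being $\sigma=0$ at $L_0$), so by the argument principle the number of roots in $\mathbb{C}_+\setminus D$ is unchanged across $L_0$. By the transversality just established, $\sigma(L)<0$ for $L\in(L_0-\varepsilon,L_0)$ and $\sigma(L)>0$ for $L\in(L_0,L_0+\varepsilon)$, so the root in $D$ contributes $0$ to $N_{a,b,\lambda}$ below $L_0$ and $1$ above it. Therefore $N_{a,b,\lambda}(k_0,L_0+\varepsilon)=N_{a,b,\lambda}(k_0,L_0-\varepsilon)+1$, as claimed.
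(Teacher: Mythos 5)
Your proposal is correct and follows essentially the same route as the paper's proof: reduce membership in $\mathcal{A}_{a,b,\lambda}$ to $F_{a,b,\lambda,k_0,L_0}(0)=0$, verify $\partial_\sigma F(0)\neq 0$ using the evenness of $\cosh(\eta L)$ and $\sinh(\eta L)/\eta$ in $\sigma$, and apply the analytic implicit function theorem to obtain a real branch $\sigma(L)$ with $\sigma'(L_0)=\frac{2\lambda(1-k_0)}{(k_0+1)(\lambda+1)}\cdot\frac{\eta_0^2}{\sinh^2(\eta_0 L_0)}>0$, exactly as in the paper (which treats $ab=0$ as a separate case where you absorb it via the convention $\sinh(\eta L)/\eta=L$ at $\eta=0$). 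Your closing localization step — showing via the argument principle that the root count outside a small disc around the origin is unchanged, so the transversal crossing of the single real root accounts for the full jump of $+1$ — is a detail the paper leaves implicit, and is a welcome completion rather than a departure.
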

	\begin{proof}[{\bf Proof of Lemma \ref{lemma3}}]
		From Appendix \ref{appendixa}, if $(k_0,L_0)\in\mathcal{A}_{a,b,\lambda}$, we have:
		$$F_{a,b,\lambda,k_0,L_0}(0)=0,$$
		with
		$$F_{a,b,\lambda,k,L}(\sigma)= (k-1)\cosh(\eta L)-\big{[}(k+1)\frac{\lambda+1}{2\lambda}\sigma+(\frac{kb}{\lambda}+a)\big{]}\frac{\sinh(\eta L)}{\eta}.$$
	\par	Denote $\sigma_0=0 $ and the corresponding $\eta$ is denoted by $\eta_0$, from Eq.\eqref{e11},
		\begin{equation}\label{e10}
			\eta_0\coth(\eta_0 L_0)=\frac{(\frac{kb}{\lambda}+a)}{(k-1)}.
		\end{equation}
\par		Denote
		$$
		H_{a,b,\lambda,k}(\sigma,L)\triangleq(k-1)\cosh(\eta L)-\big{[}(k+1)\frac{\lambda+1}{2\lambda}\sigma+(\frac{kb}{\lambda}+a)\big{]}\frac{\sinh(\eta L)}{\eta},
		$$
		with
		$$\eta^{2}=\frac{(\lambda+1)^{2}\sigma^{2}-4\lambda ab}{4\lambda^{2}},$$
		and
		$$
		\widetilde{H}_{a,b,\lambda,k}(\eta,\sigma,L)\triangleq(k-1)\cosh(\eta L)-\big{[}(k+1)\frac{\lambda+1}{2\lambda}\sigma+(\frac{kb}{\lambda}+a)\big{]}\frac{\sinh(\eta L)}{\eta}.
		$$
		\begin{itemize}
			\item If $ab\ne 0$, $\eta_0\ne 0$, at point $(\sigma_0,L_0)$, we have
			$$
			\frac{\partial\eta}{\partial\sigma}\bigg|_{(\sigma,L)=(0,L_0)}=\frac{(\lambda+1)^2}{4\lambda^2}\frac{\sigma}{\eta}=0.
			$$
		\par	The partial diffierential derivative of $H_{a,b,\lambda,k}(\sigma,L)$ with respect to $\sigma$ is
			$$
			\frac{\partial H_{a,b,\lambda,k}}{\partial \sigma}\bigg|_{(\sigma,L)=(0,L_0)}=\frac{\partial \widetilde{H}_{a,b,\lambda,k}}{\partial \eta}\cdot\frac{\partial\eta}{\partial\sigma}+\frac{\partial \widetilde{H}_{a,b,\lambda,k}}{\partial \sigma}=\frac{\partial \widetilde{H}_{a,b,\lambda,k}}{\partial \sigma}=-(k+1)\frac{\lambda+1}{2\lambda}\frac{\sinh(\eta_0 L_0)}{\eta_0}\neq 0.
			$$
		\par	Notice $\eta_0$ is a pure imaginary number ($ab>0$) or a real number ($ab<0$), from Eq.\eqref{e10}, the partial diffierential derivative of $H_{a,b,\lambda,k}(\sigma,L)$ with respect to $L$ is:
			$$
			\begin{aligned}
				\frac{\partial H_{a,b,\lambda,k}}{\partial L}\bigg|_{(\sigma,L)=(0,L_0)}=&\bigg[(k-1)\sinh(\eta_0 L_0)-(\frac{kb}{\lambda}+a)\frac{\cosh(\eta_0 L_0)}{\eta_0}\bigg]\eta_0\\
				=&\bigg[(k-1)\sinh(\eta_0 L_0)-(k-1)\frac{\cosh^2(\eta_0 L_0)}{\sinh(\eta_0 L_0)}\bigg]\eta_0\\
				=&-\frac{(k-1)\eta_0}{\sinh(\eta_0 L_0)}.
			\end{aligned}
			$$
	\par		Using the Implicit Function Theorem \ref{l1}, there exists an implicit function $\sigma(L)$ defined on $(L_0-\epsilon,L_0+\epsilon)$ for small sufficiently $\epsilon>0$ such that 
			$$
			H_{a,b,\lambda,k}(\sigma(L),L)=0, \sigma(L_0)=0,$$
			and $\sigma'(L)=-\bigg(\frac{\partial H_{a,b,\lambda,k}}{\partial L}\bigg)/\bigg(\frac{\partial H_{a,b,\lambda,k}}{\partial \sigma}\bigg)$. More precisely, 
			$$
			\sigma'(L_0)=\frac{-2\lambda(k-1)}{(\lambda+1)(k+1)}\cdot\frac{\eta_0^2}{\sinh^2(\eta_0 L_0)}>0.
			$$
	\par		The last inequality is obtained by $\frac{\eta_0}{\sinh(\eta_0L_0)}\in\mathbb{R}\backslash\{0\}$. Then we finish the proof with $ab\ne0$.
			\item If $ab=0$, we have the following characteristic equation	
			\begin{equation}\label{ee1}
				(k-1)\cosh(\eta L)-\big{(}(k+1)\frac{\lambda+1}{2\lambda}\sigma+(\frac{kb}{\lambda}+a)\big{)}\frac{\sinh(\eta L)}{\eta}=0,
			\end{equation}
			with $\eta=\frac{(\lambda+1)\sigma}{2\lambda}$.
	\par At point $(\sigma_0,L_0)$, we obtain
			$$
			(k-1)-(\frac{kb}{\lambda}+a)L_0=0.
			$$
	\par	Moreover,
			$$
			\frac{\partial\eta}{\partial\sigma}|_{(\sigma,L)=(0,L_0)}=\frac{\lambda+1}{2\lambda}.
			$$
	\par	Since $\cosh(\eta L)$ and $\frac{\sinh(\eta L)}{\eta}$ are analytic functions with respect to $\eta^2$, we obtain
			$$
			\frac{\partial \widetilde{H}_{a,b,\lambda,k}}{\partial \eta}|_{(\eta,\sigma,L)=(0,0,L_0)}=0.
			$$
	\par	Thus,
			$$
			\begin{aligned}
				&\frac{\partial H_{a,b,\lambda,k}}{\partial \sigma}|_{(\sigma,L)=(0,L_0)}=\frac{\partial \widetilde{H}_{a,b,\lambda,k}}{\partial \eta}\cdot\frac{\partial\eta}{\partial\sigma}+\frac{\partial \widetilde{H}_{a,b,\lambda,k}}{\partial \sigma}\\
				=&-(k+1)\frac{\lambda+1}{2\lambda}\frac{\sinh(\eta_0 L_0)}{\eta_0}=-(k+1)\frac{\lambda+1}{2\lambda}L_0\ne0.
			\end{aligned}
			$$
	\par	The partial diffierential derivative of $H_{a,b,\lambda,k}(\sigma,L)$ with respect to $L$ is
			$$
			\frac{\partial H_{a,b,\lambda,k}}{\partial L}|_{(\sigma,L)=(0,L_0)}=-(\frac{kb}{\lambda}+a).
			$$
	\par	Using the Implicit Function Theorem \ref{l1}, there exists an implicit function $\sigma(L)$ defined on $(L_0-\epsilon,L_0+\epsilon)$ for small sufficiently $\epsilon>0$  such that 
			$$
			H_{a,b,\lambda,k}(\sigma(L),L)=0, \sigma(L_0)=0,
			$$
			and $\sigma'(L)=-\bigg(\frac{\partial H_{a,b,\lambda,k}}{\partial L}\bigg)/\bigg(\frac{\partial H_{a,b,\lambda,k}}{\partial \sigma}\bigg)$. More precisely,
			$$
			\sigma'(L_0)=-\frac{2\lambda (k-1)}{(\lambda+1)(k+1)L_0^2}>0.
			$$
	\par		Then we finish the proof of $ab=0$. \end{itemize}
	\end{proof}
~\\
\noindent\textbf{Proof of Theorem \ref{main theorem}}
\par Lemma \ref{lemma3} gives us a way to determine $N_{a,b,\lambda}$ for each block. For example, for the case $ab>0$, the $k-L$ plane is seperated into infinitely blocks by marginal curves determined by $\mathcal{A}_{a,b,\lambda}$ (see Fig. \ref{figB11}). Thus, we know that $N_{a,b,\lambda}({\rm Block~I})=0,\ N_{a,b,\lambda}({\rm Block~II})=1,\ N_{a,b,\lambda}({\rm Block~III})=2,\  N_{a,b,\lambda}({\rm Block~IV})=3$ and so on. For the case $a>0,b=0$, there are no marginal curves (See Fig. \ref{figB22}). Thus, we know that for all the point $(k,L)\in(-1,1)\times [0,+\infty)$, we have $N_{a,b,\lambda}(k,L)=0$.
	\begin{figure}[H]
		\centering
		\subfigure[]{
			\includegraphics[width=0.45\textwidth]{B1.pdf}
			\label{figB11}
		}
		\centering
		\subfigure[]{
			\includegraphics[width=0.45\textwidth]{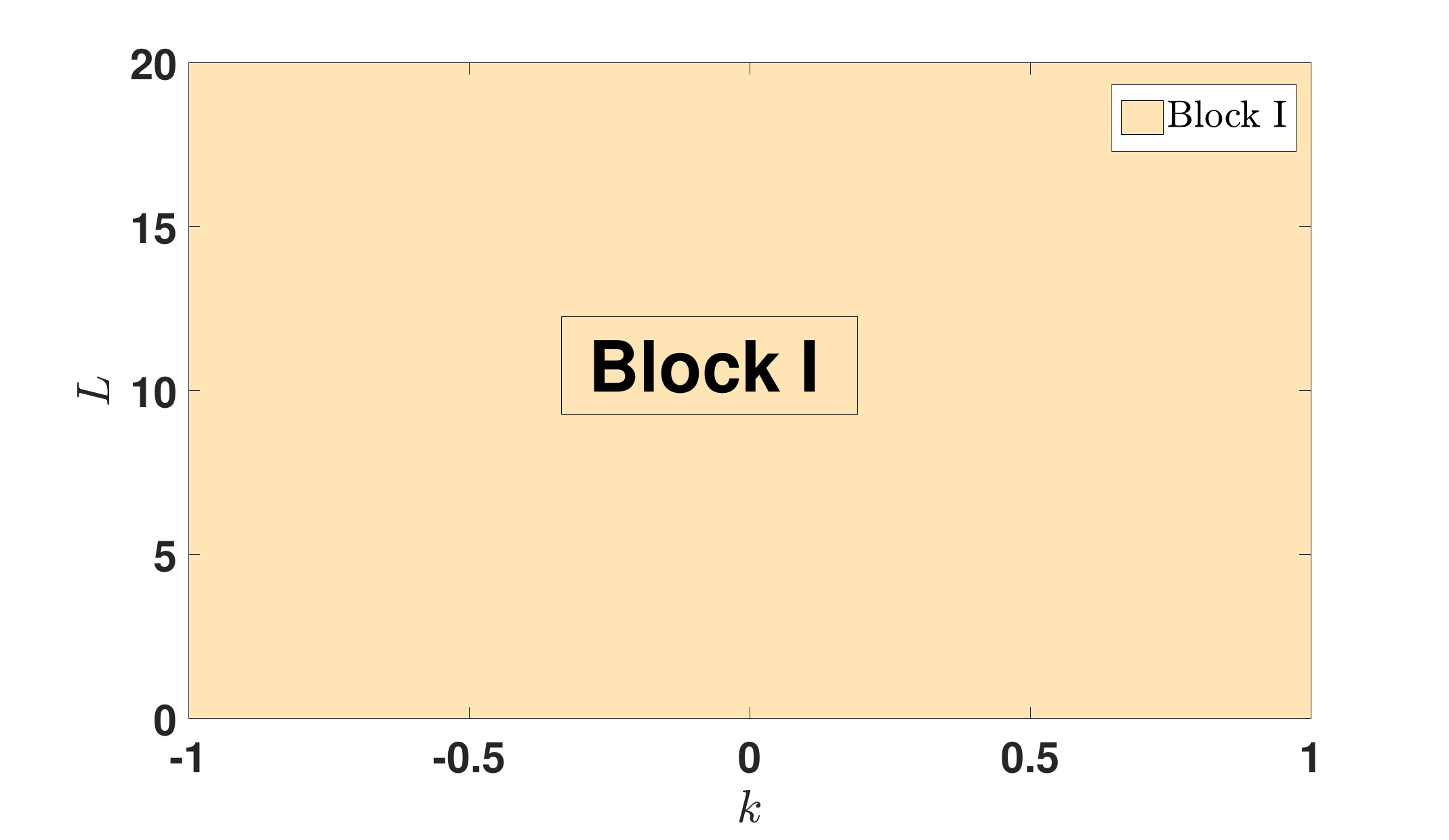}
			\label{figB22}
		}
		\centering
		\caption{$k-L$ plane is seperated by marginal curves determined by $\mathcal{A}_{a,b,\lambda}$ for different cases.}\label{figBB}
	\end{figure}
	Thus we know that the stability region is the block at the bottom which contains  $\{(k,0)|k\in(-1,1)\}$. Denote $D(a,b,\lambda)$ is the block that contains  $\{(k,0)|k\in(-1,1)\}$, we obtain
	$$
	L_c(a,b,\lambda)=\max\{L|(k,L)\in D(a,b,\lambda)\}.
	$$
\par	Moreover, if $\mathcal{A}_{a,b,\lambda}=\emptyset$, for any $L>0$, the corresponding system \eqref{system} \eqref{boun} with $(k,L)$ is exponentially stable with $L_{c}\triangleq+\infty$ that is defined in Theorem \ref{main theorem}.
	More precisely, from the calculation in Appendix \ref{appendixa}, we obtain
	\begin{equation}
		\begin{aligned}
			\displaystyle	
			L_c=\begin{cases}
				\sqrt{\frac{\lambda}{ab}}\pi,\quad &\text{if}\ a>0,b>0.\\
				\sqrt{\frac{\lambda}{ab}}\arccot(\frac{b-\lambda a}{2\sqrt{\lambda ab}}),\quad &\text{if}\ a<0, b<0.\\
				\sqrt{\frac{-\lambda}{ab}}\coth^{-1}(\frac{b-\lambda a}{2\sqrt{-\lambda ab}}),\quad &\text{if}\ -\lambda a>b>0.\\
				-\frac{2}{a},\quad &\text{if}\ b=0, a<0.\\
				+\infty,\quad &\text{if else.}
			\end{cases}
		\end{aligned}
	\end{equation}
	

	\section{Numerical Simulations}
	In this section we present some numerical simulations generated with MATLAB of upwind scheme with implicit methods for the system (\ref{system}) (\ref{boun}).  We adopt the finite difference method in both the time and the space domain, which can be written as follows. The grid size $N=100$ and the time step $\Delta t=2L/N$ are used.  
	$$
	\begin{cases}
		
		\frac{u_j^{n+1}-u_j^{n}}{\Delta t}+\frac{u_j^{n+1}-u_{j-1}^{n+1}}{\Delta x}+av_{j}^{n+1}=0,\quad &j=1,...,N;n=0,...,\frac{T}{\Delta t},\\
		\frac{v_j^{n+1}-v_j^{n}}{\Delta t}-\lambda\frac{v_{j+1}^{n+1}-v_{j}^{n+1}}{\Delta x}+bu_{j}^{n+1}=0,\quad& j=0,...,N-1;n=0,...,\frac{T}{\Delta t},\\
		u_0^{n}=kv_0^{n},\quad& n=0,...,\frac{T}{\Delta t},\\
		u_N^{n}=v_N^{n},\quad& n=0,...,\frac{T}{\Delta t}.
		
	\end{cases}
	$$
	\par	Here $u_j^{n}$ and $v_j^{n}$ provide an approximation of $y_1(x_j,t_n)$ and $y_2(x_j,t_n)$, respectively. The initial conditions are chosen as
	$$
	\begin{cases}
		y_{1}(0,x)=x+\sin^{2}x,\\
		y_{2}(0,x)=\frac{L+\sin^2L}{L+L^2}(x^{2}+x).
	\end{cases}$$
	\par	Energy is measured in the $L_2$-norm for
	$$
	E_t\triangleq \int_0^L [y_1(t,x)^2+y_2(t,x)^2]{\rm d}x.
	$$
	\par We choose four triple $(a,b,\lambda)=(1,1,1),(-1,-2,1),(0,1,1),(-1,0,1)$. For each triple value of $(a,b,\lambda)$, we numerically implement system \eqref{system} for the parameter $(k,L)\in (-1,1)\times(0,3)$. As shown in Fig. \ref{fig1}, the analytical creteria for stability region established in Section 3 can be confirmed by our numerical simulations. Specially, for $a=-1,b=-2,\lambda=1,L=0.9$, Fig. \ref{fig1b} shows that the system will be stabilized only when the coupling gain $k\in(-1,k_*)$ for $k_*\approx -0.23$. Numerical confirmation is shown in Fig. \ref{fig2}. For different values of coupling gain $k$, the energy converges or diverges exponentially with different rates.
	\begin{figure}[H]
		\centering
		\includegraphics[height=6cm]{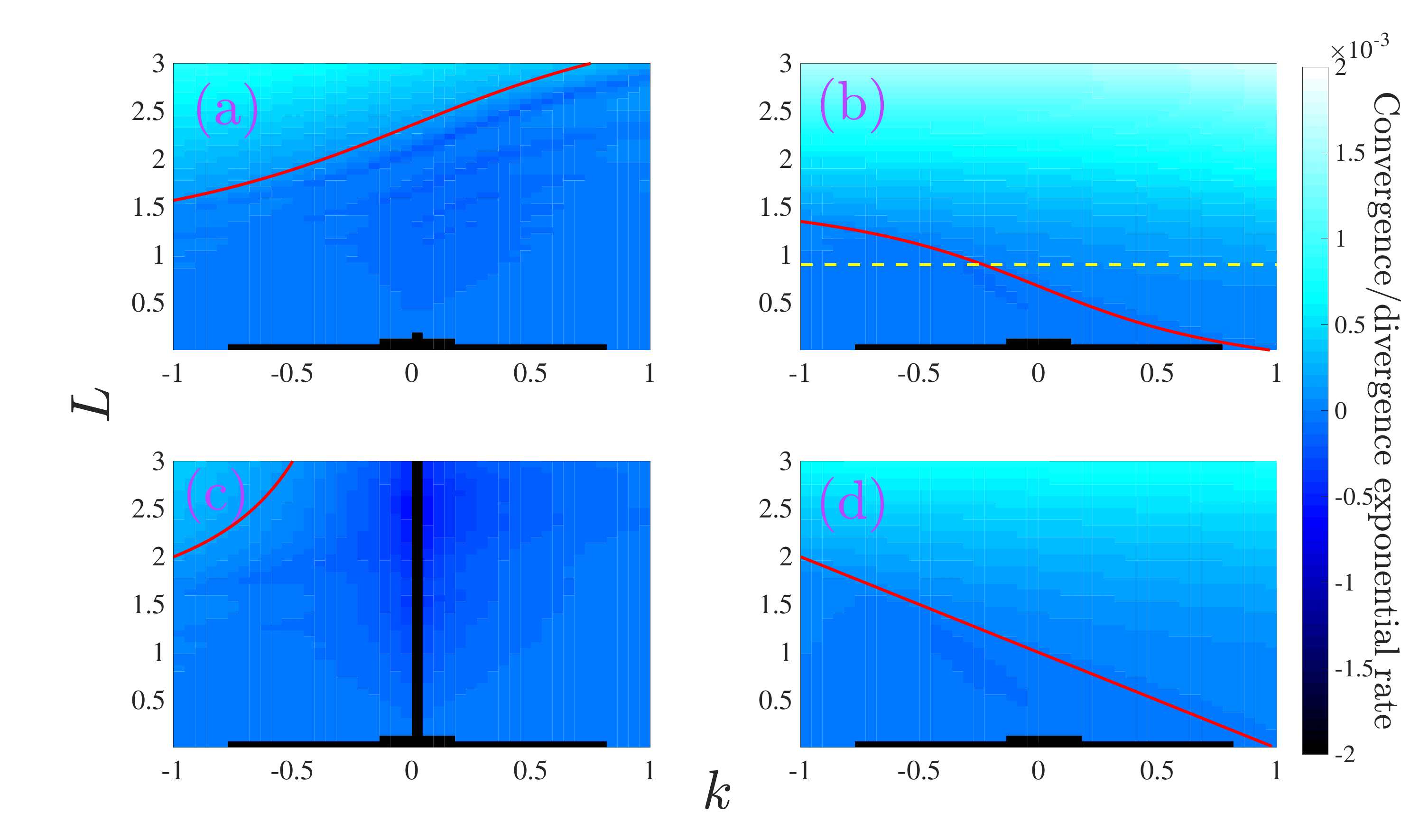}
		\subfigure{\label{fig1a}}
		\subfigure{\label{fig1b}}
		\subfigure{\label{fig1c}}
		\subfigure{\label{fig1d}}
		\caption{Red curves are depicted by the analytical results according to Appendix.\ref{appendixa}, below which are the stability region for $k,L$. The colors represent the exponential rates of the convergence or divergence of the  trajectory numerically generated by Eq.\eqref{system}. The parameters are (a) $a=1,b=1,\lambda=1$ (b) $a=-1,b=-2,\lambda=1$ (c) $a=0,b=1,\lambda=1$ (d) $a=-1,b=0,\lambda=1$. Yellow dashed line in (b) represents $L=0.9$, which will be investigated more clearly in Fig. \ref{fig2}.}\label{fig1}
	\end{figure}
	\begin{figure}[H]
		\centering
		\includegraphics[height=5cm]{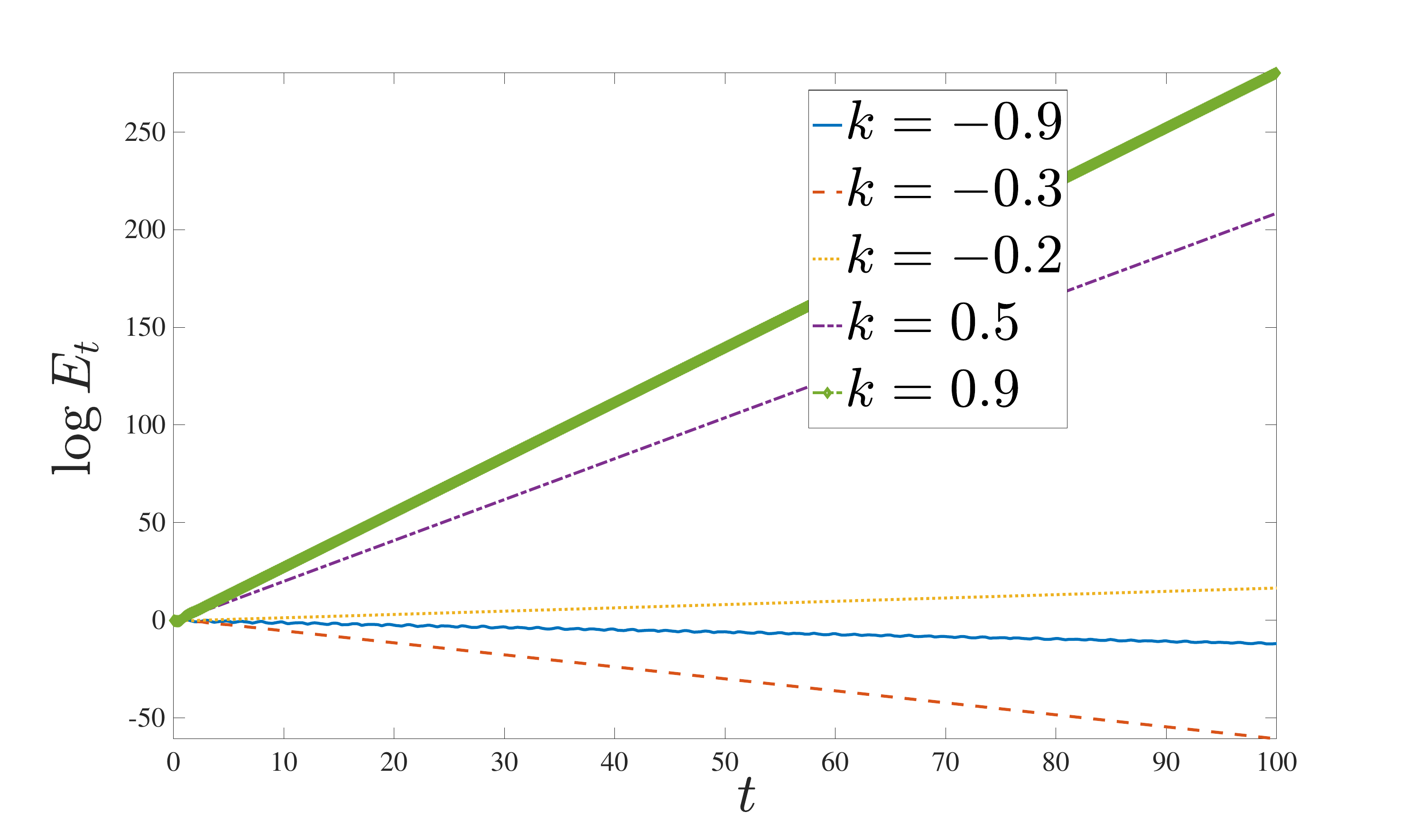}
		\caption{For different values of coupling gain $k$, the energy $E_t$ converges or diverges  exponentially with different rates. The parameters are $a=-1,b=-2,\lambda=1, L=0.9$.}\label{fig2}
	\end{figure}

	\section{Backstepping control}
	In this section, we want to use Backstepping method combined with the observer design to stabilize the system with the case that cannot be stabilized by the proportional feedback control.
	We first make a scaling of space variable $x \rightarrow L-x$, then the control could be on the right side and the boundary condition be on the left. Theorem \ref{main theorem} can apply to the following system:
	\begin{align}
		\begin{cases}\label{system3}
			\partial_{t}y_{1}-\partial_{x}y_{1}+ay_{2}=0,\quad&  (t,x)\in(0,+\infty)\times(0,L),\\
			\partial_{t}y_{2}+\lambda\partial_{x}y_{2}+by_{1}=0,\quad&  (t,x)\in(0,+\infty)\times(0,L),\\
			y_{2}(t,0)=y_{1}(t,0),\quad& t\in(0,+\infty),\\
			y_{1}(t,L)=U(t),\quad& t\in(0,+\infty).
		\end{cases}
	\end{align}
	\par The output is 
	\begin{equation}
		\label{boun2}Y(t)=y_{2}(t,L).
	\end{equation}
\par 	Applying the results in Anfinsen et al.\cite{An2017}, we design the following observer:
	\begin{align}
		\begin{cases}\label{system2}
			\partial_{t}\hat{y}_{1}-\partial_{x}\hat{y}_{1}+a\hat{y}_{2}=\Gamma_1(x)(Y(t)-\hat{y}_2(t,L)),\quad  (t,x)\in(0,+\infty)\times(0,L),\\
			\partial_{t}\hat{y}_{2}+\lambda\partial_{x}\hat{y}_{2}+b\hat{y}_{1}=\Gamma_2(x)(Y(t)-\hat{y}_2(t,L)),\quad  (t,x)\in(0,+\infty)\times(0,L),\\
			\hat{y}_{2}(t,0)=\hat{y}_{1}(t,0),\quad t\in(0,+\infty),\\
			\hat{y}_{1}(t,L)=U(t),\quad t\in(0,+\infty),
		\end{cases}
	\end{align}
	with $\Gamma_1(x), \Gamma_2(x)$ are injection gains to be designed.
\par	We have the following proposition:
	\begin{prop}\label{observer}
		Suppose the system (\ref{system3}) and the observer (\ref{system2}) with $\lambda,a,b\in\mathbb{R}$, $T_{opt1}\triangleq\frac{(\lambda+1)}{\lambda}L$. There exists suitable injection designs 
		$\Gamma_1(x), \Gamma_2(x)$ such that for all $t>T_{opt1}$, we have:
		\begin{align}
			\begin{cases}
				y_1(t,{x})=\hat{y}_{1}(t,x),\quad  (t,x)\in[T_{opt1},+\infty)\times[0,L], \\
				y_2(t,{x})=\hat{y}_{2}(t,x),\quad  (t,x)\in[T_{opt1},+\infty)\times[0,L].
			\end{cases}
		\end{align}	
	\end{prop}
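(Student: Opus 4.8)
The plan is to follow the backstepping-with-observer strategy of Anfinsen et al.~\cite{An2017}, reducing the claim to a finite-time property of a decoupled target system through an invertible Volterra transformation. First I would introduce the observer error $\tilde y_i\triangleq y_i-\hat y_i$ ($i=1,2$). Subtracting the observer \eqref{system2} from the plant \eqref{system3} and using $Y(t)-\hat y_2(t,L)=\tilde y_2(t,L)$, the error system reads
\begin{align*}
	\begin{cases}
		\partial_t \tilde y_1 - \partial_x \tilde y_1 + a\tilde y_2 = -\Gamma_1(x)\tilde y_2(t,L),\\
		\partial_t \tilde y_2 + \lambda \partial_x \tilde y_2 + b\tilde y_1 = -\Gamma_2(x)\tilde y_2(t,L),\\
		\tilde y_2(t,0)=\tilde y_1(t,0),\quad \tilde y_1(t,L)=0.
	\end{cases}
\end{align*}
The objective is thus to choose the injection gains $\Gamma_1,\Gamma_2$ so that $\tilde y(t,\cdot)\equiv 0$ for $t>T_{opt1}$.

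Next I would look for a boundedly invertible Volterra transformation
$$
\begin{pmatrix}\tilde y_1 \\ \tilde y_2\end{pmatrix}(t,x)=\begin{pmatrix}\tilde w_1 \\ \tilde w_2\end{pmatrix}(t,x)-\int_x^L P(x,\xi)\begin{pmatrix}\tilde w_1 \\ \tilde w_2\end{pmatrix}(t,\xi)\,d\xi,
$$
with a $2\times2$ kernel $P=(P_{ij})$ on the triangle $\{0\le x\le\xi\le L\}$, mapping the error system onto the decoupled target system
\begin{align*}
	\begin{cases}
		\partial_t \tilde w_1 - \partial_x \tilde w_1 = 0,\\
		\partial_t \tilde w_2 + \lambda \partial_x \tilde w_2 = 0,\\
		\tilde w_2(t,0)=\tilde w_1(t,0),\quad \tilde w_1(t,L)=0.
	\end{cases}
\end{align*}
Differentiating the transformation in $t$ and $x$, integrating by parts in $\xi$, and matching against the error dynamics yields a system of first-order hyperbolic kernel PDEs for the $P_{ij}$, with boundary data along the diagonal $\xi=x$ produced by the coupling coefficients $a,b$. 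The boundary term at the edge $\xi=L$ (where $\tilde w_1(t,L)=0$ drops out and $\tilde y_2(t,L)=\tilde w_2(t,L)$) must absorb the output-injection terms, which fixes the gains through the boundary values $\Gamma_i(x)=\lambda\,P_{i2}(x,L)$ (up to the sign/normalization produced by the computation).

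The hard part will be establishing well-posedness of the kernel equations. I would convert the kernel PDEs into an equivalent system of integral equations by integrating along their characteristics on the triangle, then run a successive-approximation (Picard) iteration; bounding the iterates by a majorizing series on the bounded triangular domain gives a uniformly convergent series and hence a unique continuous solution $P_{ij}$, so that $\Gamma_1,\Gamma_2$ are well defined. Since $P$ is a Volterra kernel, the transformation is automatically invertible with an inverse of the same Volterra type, whose kernel solves an analogous well-posed problem; consequently the $\tilde y$- and $\tilde w$-systems are equivalent in $L^2(0,L)$.

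Finally I would read off finite-time convergence from the target system. As $\tilde w_1$ is transported leftward at speed $1$ with zero inflow at $x=L$ and no in-domain coupling, one has $\tilde w_1(t,\cdot)\equiv 0$ for $t>L$; the boundary condition $\tilde w_2(t,0)=\tilde w_1(t,0)$ then supplies zero inflow to $\tilde w_2$, which is transported rightward at speed $\lambda$ and is cleared after an additional time $L/\lambda$. Hence $\tilde w(t,\cdot)\equiv 0$ for $t>L+L/\lambda=\frac{(\lambda+1)}{\lambda}L=T_{opt1}$, and invertibility of the transformation gives $\tilde y(t,\cdot)\equiv 0$, i.e.\ $y_i=\hat y_i$ on $[T_{opt1},+\infty)\times[0,L]$, which is the assertion of Proposition~\ref{observer}.
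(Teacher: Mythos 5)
Your overall strategy matches the paper's: form the error system for $\tilde y_i=y_i-\hat y_i$, remove the couplings by a Volterra (backstepping) transformation whose boundary trace at $\xi=L$ fixes the injection gains $\Gamma_i(x)=\lambda P_{i2}(x,L)$, and read off finite-time extinction of the target in time $L+L/\lambda=T_{opt1}$ from the cascade $\tilde w_1\to\tilde w_2$. The difference is in the choice of target. The paper's transformation \eqref{ba1} integrates only against the second component $\beta$, and the target \eqref{ta1} is \emph{not} fully decoupled: it keeps the terms $\int_x^L Q^i(x,\xi)\alpha(\xi)\,{\rm d}\xi$ and $b\alpha$. That is enough because the $\alpha$-equation is autonomous with a strictly upstream Volterra term and zero inflow at $x=L$, so $\alpha$ still dies after time $L$ and then feeds zero into $\beta$. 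You instead insist on a full $2\times2$ kernel and a completely decoupled pair of transport equations.

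That stronger normalization is where your argument has a concrete gap. Writing the kernel PDEs for $P=(P_{ij})$ on the triangle $\{0\le x\le\xi\le L\}$, only the off-diagonal components $P_{12},P_{21}$ (whose characteristic directions are $(1,-\lambda)$ and $(-\lambda,1)$) pick up boundary data on the diagonal, namely $P_{12}(x,x)=a/(\lambda+1)$ and $P_{21}(x,x)=-b/(\lambda+1)$. The components $P_{11}$ and $P_{22}$ satisfy transport equations with characteristic direction $(1,1)$, \emph{parallel} to the diagonal $\xi=x$, so they receive no data there and your statement that the boundary data all lives on the diagonal leaves them underdetermined (and one cannot set them to zero, since their equations carry the sources $aP_{21}$ and $bP_{12}$). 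They must instead be pinned down on the edge $x=0$, and the only admissible choice is dictated by preserving the reflection condition: requiring $\tilde y_1(t,0)=\tilde y_2(t,0)\Leftrightarrow\tilde w_1(t,0)=\tilde w_2(t,0)$ forces $P_{11}(0,\xi)=P_{21}(0,\xi)$ and $P_{22}(0,\xi)=P_{12}(0,\xi)$. The resulting problem is a coupled Goursat system (the data for $P_{11}$ at $x=0$ depends on $P_{21}$, whose source is $bP_{11}$, and likewise for the other pair), which is still solvable by successive approximations on the bounded triangle, but this is the step that actually needs proving and it is not covered by the generic "data on the diagonal" picture you describe. The paper's weaker, triangular target system is chosen precisely to avoid this issue, at the cost of a slightly less transparent finite-time argument for the $\alpha$-equation.
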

	\begin{proof}[{\bf Proof of Proposition \ref{observer}}]
		 The state estimation errors $\widetilde{y}_1\triangleq y_1-\hat{y}_1,\widetilde{y}_2\triangleq y_2-\hat{y}_2$ satisfy the dynamics:
		\begin{align}\label{er1}
			\begin{cases}
				\partial_{t}\widetilde{y}_{1}-\partial_{x}\widetilde{y}_{1}+a\widetilde{y}_{2}=-\Gamma_1(x)\widetilde{y}_2(L),\quad&  (t,x)\in(0,+\infty)\times(0,L),\\
				\partial_{t}\widetilde{y}_{2}+\lambda\partial_{x}\widetilde{y}_{2}+b\widetilde{y}_{1}= 
				-\Gamma_2(x)\widetilde{y}_2(L),\quad&  (t,x)\in(0,+\infty)\times(0,L),\\
				\widetilde{y}_{2}(t,0)=\widetilde{y}_{1}(t,0),\quad& t\in(0,+\infty),\\
				\widetilde{y}_{1}(t,L)=0,\quad& t\in(0,+\infty).
			\end{cases}
		\end{align}
\par		Design the backstepping transformation:
		\begin{align}\label{ba1}
			\begin{cases}
				\widetilde{y}_{1}(t,x)=\alpha(t,x)+\int_{x}^{L}P^{1}(x,\xi)\beta(t,\xi){\rm d}\xi,\\
				\widetilde{y}_{2}(t,x)=\beta(t,x)+\int_{x}^{L}P^{2}(x,\xi)\beta(t,\xi){\rm d}\xi,
			\end{cases}
		\end{align}
		which makes system \eqref{er1} become the target system as follows.
		\begin{align}\label{ta1}
			\begin{cases}
				\partial_{t}\alpha-\partial_{x}\alpha+\int_x^L Q^1(x,\xi)\alpha(\xi){\rm d}\xi=0,\quad&  (t,x)\in(0,+\infty)\times(0,L),\\
				\partial_{t}\beta+\lambda\partial_{x}\beta+\int_x^L Q^2(x,\xi)\alpha(\xi){\rm d}\xi+b\alpha=0,\quad&  (t,x)\in(0,+\infty)\times(0,L),\\
				\alpha(t,0)=\beta(t,0),\quad& t\in(0,+\infty),\\
				\alpha(t,L)=0,\quad& t\in(0,+\infty).
			\end{cases}
		\end{align}
\par		The proof of existence of  $Q^1,Q^2,P^1,P^2$ can be found in \cite[Section B]{An2017}. 
		The solution of target system \eqref{ta1} will vanish in finite time for $t> T_{opt1}$. If we denote by $\Gamma_1(x)\triangleq\lambda P^{1}(x,L),\Gamma_2(x)\triangleq\lambda P^{2}(x,L)$, by the transformation \eqref{ba1}, we obtain that the solution $[\tilde{y_1},\tilde{y_2}]^{
			\rm T}$ of Eq.\eqref{er1} will vanish when $t>T_{opt1}$.
	\end{proof}
		It follows from Proposition \ref{observer} that the right side of Eq.\eqref{system2} vanishes when $t> T_{opt1}$, which makes it become a homogeneous linear hyperbolic system. By \cite[Section 2.2, Section 2.3]{Hu2019}, we know that there exists an invertible backstepping transformation 
		\begin{align}\label{ba2}
			\begin{cases}
				z(t,x)=\hat{y}_{1}(t,x)-\int_{0}^{x}[K^{11}(x,\xi)\hat{y}_{1}(t,\xi)+K^{12}(x,\xi)\hat{y}_{2}(t,\xi)]{\rm d}\xi,\\
				w(t,x)=\hat{y}_{2}(t,x)-\int_{0}^{x}[K^{21}(x,\xi)\hat{y}_{1}(t,\xi)+K^{22}(x,\xi)\hat{y}_{2}(t,\xi)]{\rm d}\xi,
			\end{cases}
		\end{align}
		with its inverse transformation
		\begin{align}\label{in2}
			\begin{cases}
				\hat{y}_{1}(t,x)=z(t,x)+\int_{0}^{x}[L^{11}(x,\xi)z(t,\xi)+L^{12}(x,\xi)w(t,\xi)]{\rm d}\xi,\\
				\hat{y}_{2}(t,x)=w(t,x)+\int_{0}^{x}[L^{21}(x,\xi)z(t,\xi)+L^{22}(x,\xi)w(t,\xi)]{\rm d}\xi,
			\end{cases}
		\end{align}
		which transforms system \eqref{system2} into the target system as follows.
		\begin{align}\label{ta2}
			\begin{cases}
			\partial_{t}z-\partial_{x}z=0,\quad& (t,x)\in(T_{opt1},+\infty)\times(0,L),\\
			\partial_{t}w+\lambda\partial_{x}w=g(x)w(t,0),\quad& (t,x)\in(T_{opt1},+\infty)\times(0,L),\\
				w(t,0)=z(t,0),\quad& t\in(T_{opt1},+\infty),\\
				z(t,L)=0,\quad& t\in(T_{opt1},+\infty).
			\end{cases}
		\end{align}
	\par	From transformation \eqref{in2} evaluated at $x=L$, noting that $z(t,L)\equiv 0$, we obtain the following feedback control laws for the system \eqref{system2}
		$$U(t)\triangleq \int_{0}^{L}[L^{11}(L,\xi)z(t,\xi)+L^{12}(L,\xi)w(t,\xi)]{\rm d}\xi.$$ This yields that $\hat{y}_1, \hat{y}_2$ vanishes in finite time $t\ge T_{opt}\triangleq T_{opt1}+T_{opt2}$ with $T_{opt1}=T_{opt2}=\dfrac{(\lambda+1)L}{\lambda}.$ Thus we get the main theorem in this section.
	
	\begin{thm}\label{back2}
		There exists a boundary feedback control law $U(t)$ for the system (\ref{system3}) with $\lambda,a,b\in\mathbb{R}$  such that, for every $Y_0\in L^2(0,L)$, the solution
		$Y\in C^0([0,+\infty);L^2(0,L))$ to (\ref{system3}) satisfies $Y(t)=0, \forall t\ge T_{opt},$ where $T_{opt}=\frac{2(\lambda+1)}{\lambda}L$.
	\end{thm}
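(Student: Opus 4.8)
The plan is to prove Theorem \ref{back2} by composing the two finite-time mechanisms already assembled above: the observer of Proposition \ref{observer}, which reconstructs the true state exactly after time $T_{opt1}=\frac{(\lambda+1)}{\lambda}L$, and a backstepping boundary feedback that drives the reconstructed observer state to zero in an additional time $T_{opt2}=\frac{(\lambda+1)}{\lambda}L$. First I would invoke Proposition \ref{observer} to fix the injection gains $\Gamma_1,\Gamma_2$, so that the estimation errors $\widetilde y_i=y_i-\hat y_i$ satisfy $\widetilde y_1\equiv\widetilde y_2\equiv 0$ on $[0,L]$ for every $t\ge T_{opt1}$. The key consequence is that the innovation term $Y(t)-\hat y_2(t,L)=\widetilde y_2(t,L)$ vanishes for $t\ge T_{opt1}$, so on $(T_{opt1},+\infty)$ the observer \eqref{system2} reduces to the homogeneous hyperbolic system treated by the controller; since $y_i=\hat y_i$ there, it suffices to steer $(\hat y_1,\hat y_2)$ to zero while tracking the output through $Y(t)=y_2(t,L)=\hat y_2(t,L)$.

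Next I would apply the invertible backstepping transformation \eqref{ba2}, with inverse \eqref{in2}, whose Volterra kernels $K^{ij},L^{ij}$ exist by the analysis of Hu et al.\ \cite{Hu2019}; this maps the homogeneous observer into the target system \eqref{ta2}. The feedback $U(t)=\int_0^L[L^{11}(L,\xi)z(t,\xi)+L^{12}(L,\xi)w(t,\xi)]\,{\rm d}\xi$ is precisely the choice that enforces $z(t,L)=0$. I would then read off the finite-time decay of \eqref{ta2} along characteristics. The equation $\partial_t z-\partial_x z=0$ with $z(t,L)=0$ transports the zero boundary datum leftward across $[0,L]$, so $z\equiv 0$ once the transit time $L$ has elapsed past $T_{opt1}$. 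As soon as $z(t,0)=0$, the coupling $w(t,0)=z(t,0)$ vanishes and the right-hand side $g(x)w(t,0)$ of the $w$-equation is zero, so $\partial_t w+\lambda\partial_x w=0$ transports the zero left-boundary datum rightward, forcing $w\equiv 0$ after the additional transit time $L/\lambda$. Hence $z\equiv w\equiv 0$ on $[0,L]$ after the elapsed time $L+L/\lambda=\frac{(\lambda+1)}{\lambda}L=T_{opt2}$, and by the invertibility of \eqref{in2} the observer state $(\hat y_1,\hat y_2)$ vanishes for every $t\ge T_{opt1}+T_{opt2}=T_{opt}$. In particular $Y(t)=\hat y_2(t,L)=0$ for all $t\ge T_{opt}$, which is exactly the assertion.

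The main obstacle is not the algebra of the composition but the careful bookkeeping of the two finite times together with the regularity needed to justify the characteristic argument. One must verify that the injection gains from \cite{An2017} and the Volterra kernels from \cite{Hu2019} yield an $L^2$-well-posed closed loop with $Y\in C^0([0,+\infty);L^2(0,L))$, and that the transport decay estimates in \eqref{ta2} remain valid in the $L^2$ setting rather than only for smooth data, so that the finite-time extinction passes through the bounded linear maps \eqref{ba2} and \eqref{in2} without loss. These well-posedness and kernel-existence facts constitute the technical heart, but they are supplied by the cited backstepping literature; the remaining work is simply to check that the error-estimation stage and the stabilization stage concatenate with the stated transit times $T_{opt1}$ and $T_{opt2}$, giving $T_{opt}=\frac{2(\lambda+1)}{\lambda}L$.
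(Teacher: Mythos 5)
Your proposal is correct and takes essentially the same route as the paper: invoke Proposition \ref{observer} so that the innovation term vanishes after $T_{opt1}$, apply the backstepping transformation \eqref{ba2}--\eqref{in2} from \cite{Hu2019} to reach the target system \eqref{ta2}, and conclude finite-time extinction after the additional transit time $T_{opt2}=\frac{(\lambda+1)}{\lambda}L$. Your explicit tracing of characteristics in \eqref{ta2} (first $z$ in time $L$, then $w$ in time $L/\lambda$) merely spells out what the paper leaves to the cited references.
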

	\begin{rem}
		Proposition \ref{observer} and Theorem \ref{back2} have designed boundary feedback controls that stablize the cases which cannot be stabilized by the proportional boundary feedback control mentioned in Theorem \ref{main theorem}.
	\end{rem}
	\begin{rem}
		For the following system:
		\begin{align}
			\begin{cases}
				\partial_{t}y_{1}-\lambda(x)\partial_{x}y_{1}+a(x)y_{2}=0,\quad& (t,x)\in(0,+\infty)\times(0,L),\\
				\partial_{t}y_{2}+\mu(x)\partial_{x}y_{2}+b(x)y_{1}=0,\quad& (t,x)\in(0,+\infty)\times(0,L),\\
				y_{2}(t,0)=y_{1}(t,0),\quad& t\in(0,+\infty),\\
				y_{1}(t,L)=U(t),\quad& t\in(0,+\infty).
			\end{cases}
		\end{align}
	with $\lambda(x),\mu(x)\in C^1([0,L]), a(x),b(x)\in C^0([0,1])$ are known functions that satisfied $\lambda(x),\mu(x)>0$ and $k\in\mathbb{R}$.
\par	The basic ideas of designing the observer in Proposition \ref{observer} and the backstepping control in Theorem \ref{back2} can be applied to this system. Therefore, the system could be stabilized to zero in finite time by a boundary control $U(t)$ depending on $y_2(\tau,L)(\tau\in(0,t))$.
	
	\end{rem}
	\section*{Acknowledgements}
	
	
	\appendix
	\section{Calculation on $\mathcal{A}_{a,b,\lambda}$}\label{appendixa}
	Recall that the characteristic equation \eqref{e11} is
	$$
	(k-1)\cosh(\eta L)-\big{[}(k+1)\frac{\lambda+1}{2\lambda}\sigma+(\frac{kb}{\lambda}+a)\big{]}\frac{\sinh(\eta L)}{\eta}=0.
	$$
	with 
	$$\eta^{2}=\frac{(\lambda+1)^{2}\sigma^{2}-4\lambda ab}{4\lambda^{2}}.$$
	and the definition of $\mathcal{A}_{a,b,\lambda}$ is
	$$
	\mathcal{A}_{a,b,\lambda}:=\big\{(k,L)||k|<1,L\ge 0,\ \text{there exists}\ \beta\in\R\ \text{such that}\ F_{a,b,\lambda,k,L}(\mathrm{i}\beta)=0\big\},
	$$
	\par First, if $L=0$, we have $F_{a,b,\lambda,k,L}(\sigma)=k-1\ne0$ for $k\in(-1,1)$, which means
	\begin{equation}\label{st}
		(k,0)\not\in\mathcal{A}_{a,b,\lambda}.
	\end{equation} 
	\par We discuss $\mathcal{A}_{a,b,\lambda}$ with $(k,L)\in(-1,1)\times(0,+\infty)$.
	We first observe that if $\sigma=\mathrm{i}\beta$, we have
	$$\eta^2\in\R,\ \cosh(\eta L)\in\R,\ \frac{\sinh(\eta L)}{\eta}\in\R.$$
	\par Thus, the imaginary part of Eq.\eqref{e11} is:
	\begin{equation}\label{a1}
		(k+1)\frac{\lambda+1}{2\lambda}\beta\frac{\sinh(\eta L)}{\eta}=0,
	\end{equation}
and the real part of Eq.\eqref{e11} is:
	\begin{equation}\label{a2}
		(k-1)\cosh(\eta L)-(\frac{kb}{\lambda}+a)\frac{\sinh(\eta L)}{\eta}=0.
	\end{equation}
	\par If $\frac{\sinh(\eta L)}{\eta}=0$ (motivated by Eq.\eqref{a1}), the left side of Eq.\eqref{a2} is $k-1\neq0$, thus $\frac{\sinh(\eta L)}{\eta}\ne0$.
	\par From Eq.\eqref{a1}, we obtain 
	$$\beta=0.$$
\par	Therefore,
	$$
	\mathcal{A}_{a,b,\lambda}=\big\{(k,L)||k|<1,L>0,F_{a,b,\lambda,k,L}(0)=0\big\},
	$$
and the characteristic equation can be simplified:
	\begin{equation}\label{a3}
		(k-1)\cosh(\eta L)-(\frac{kb}{\lambda}+a)\frac{\sinh(\eta L)}{\eta}=0,
	\end{equation}
	with 
	$$\eta^{2}=-\frac{ab}{\lambda}.$$
\par	We now divide into three cases:
	\begin{itemize}
		\item[Case I.] $ab=0$, the corresponding $\eta=0$. Eq.\eqref{a3} yields:
		$$
		(k-1)-(\frac{kb}{\lambda}+a)L=0.
		$$
\par	Define $L_k\triangleq\frac{k-1}{\frac{kb}{\lambda}+a}$, we obtain
	\begin{equation}\label{ab=0}
		\mathcal{A}_{a,b,\lambda}=\begin{cases}
			\big\{(k,L_k)|k\in(-1,0]\big\},\quad &\text{if}\ a=0,b>0.\\
			\big\{(k,L_k)|k\in[0,1)\big\},\quad &\text{if}\ a=0,b<0.\\
			\emptyset,\quad &\text{if}\ a\geq 0,b=0.\\
			\big\{(k,L_k)|k\in(-1,1)\big\},\quad &\text{if}\ a<0,b=0.
		\end{cases}
	\end{equation}
		\begin{figure}[H]
			\centering
			\subfigure[]{
				\includegraphics[width=0.45\textwidth]{A1.pdf}
				\label{figA1}
			}
			\centering
			\subfigure[]{
				\includegraphics[width=0.45\textwidth]{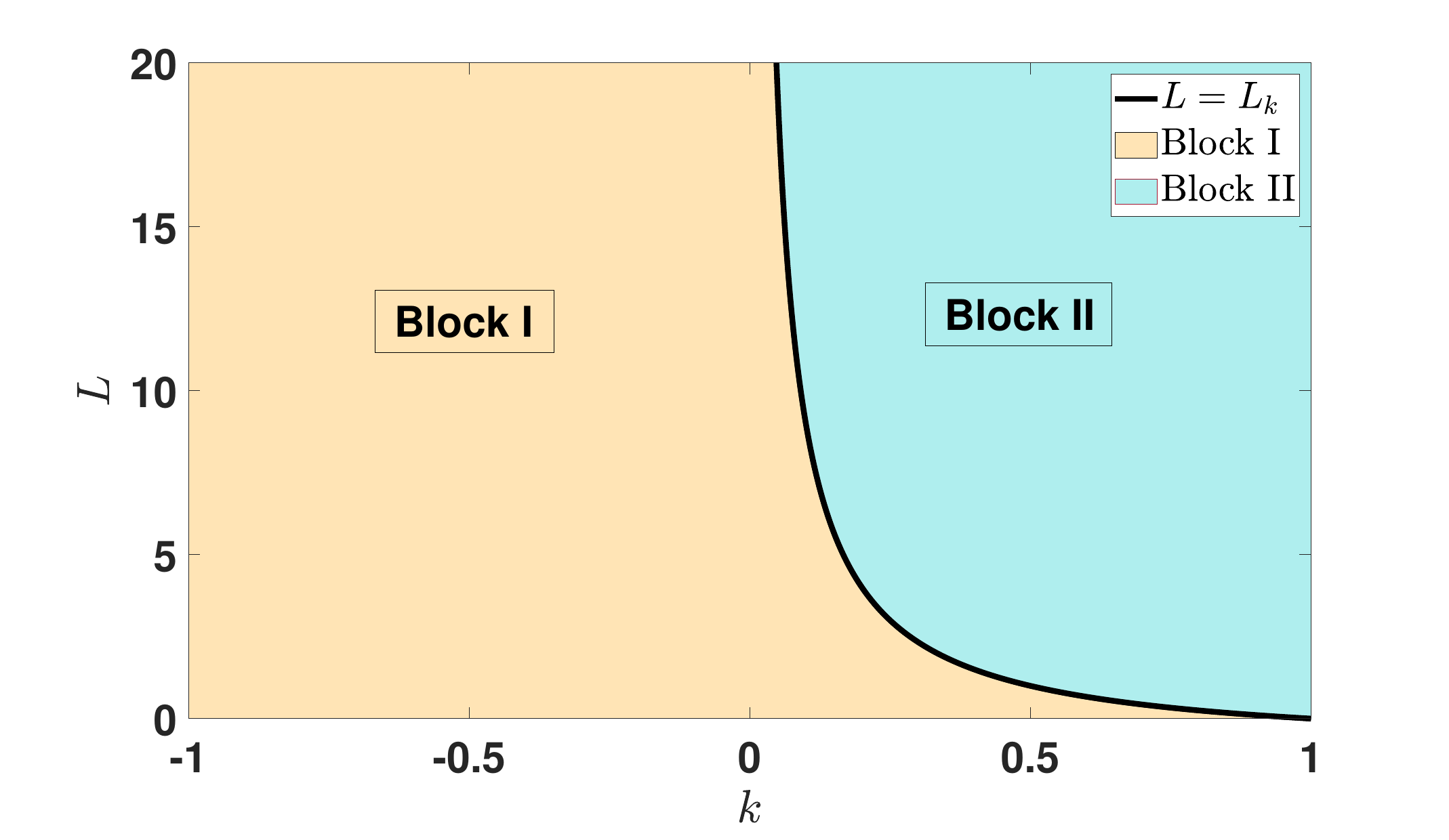}
				\label{figA2}
			}
			\subfigure[]{
				\includegraphics[width=0.45\textwidth]{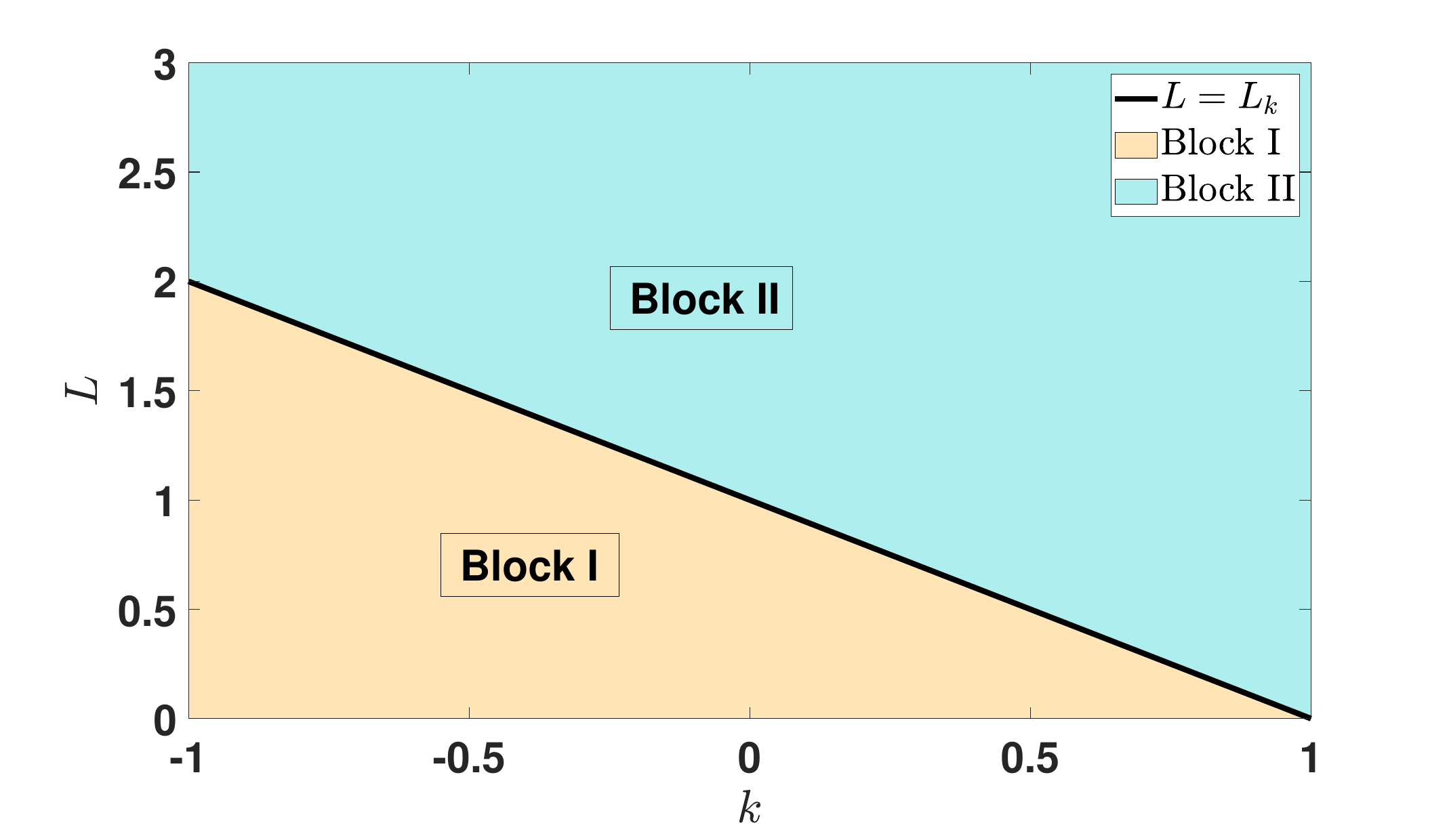}
				\label{figA3}}
			\subfigure[]{
				\includegraphics[width=0.45\textwidth]{C4.pdf}
				\label{figA4}}
			\centering
			\caption{$k-L$ plane is seperated by marginal curves determined by $\mathcal{A}_{a,b,\lambda}$ for the case $ab=0$. Black marginal  curves are determined by Eq.\eqref{ab=0}. The parameters are (a) $a=0,b=1,\lambda=1$,(b) $a=0,b=-1,\lambda=1$, (c) $a=-1,b=0,\lambda=1$, (d) $a=1,b=0,\lambda=1$. }\label{figA}
		\end{figure}
		That is
	\begin{equation}
		L_c=\begin{cases}
			+\infty,\quad &\text{if}\ a=0.\\
			+\infty,\quad &\text{if}\ a\geq 0,b=0.\\
			L_{-1}=-\frac{2}{a},\quad &\text{if}\ a<0,b=0.
		\end{cases}
	\end{equation}
		\item[Case II.] $ab>0$, Eq.\eqref{a3} yields:
		$$
		(k-1)\cos(\sqrt{\frac{ab}{\lambda}})-(\frac{kb}{\lambda}+a)\frac{\sin(\sqrt{\frac{ab}{\lambda}})}{\sqrt{\frac{ab}{\lambda}}}=0.
		$$
		It can be written as
		$$
		\cot(\sqrt{\frac{ab}{\lambda}}L)=\frac{\frac{kb}{\lambda}+a}{(k-1)\sqrt{\frac{ab}{\lambda}}}.
		$$
		We define
		\begin{equation}\label{ab>0}
			L_{k,n}\triangleq\sqrt{\frac{\lambda}{ab}}\bigg(\arccot\big(\frac{\frac{kb}{\lambda}+a}{(k-1)\sqrt{\frac{ab}{\lambda}}}\big)+n\pi\bigg)\ (n\in\N).
		\end{equation}
		Then we obtain
		$$
		\mathcal{A}_{a,b,\lambda}=\big\{(k,L_{k,n})|k\in(-1,1)\big\}.
		$$
		
		\begin{figure}[H]
			\centering
			\subfigure[]{
				\includegraphics[width=0.45\textwidth]{B1.pdf}
				\label{figB1}
			}
			\centering
			\subfigure[]{
				\includegraphics[width=0.45\textwidth]{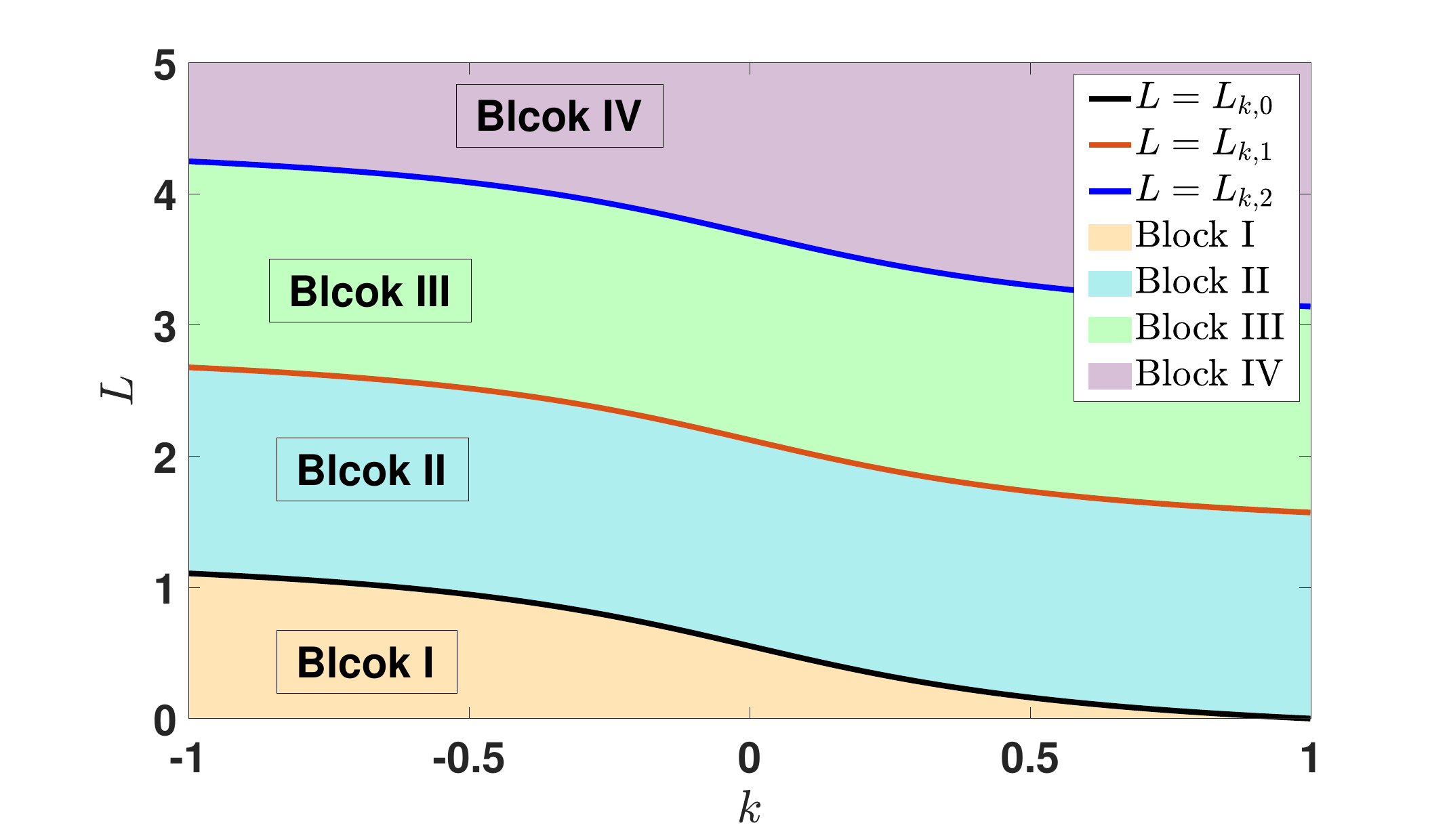}
				\label{figB2}
			}
			\centering
			\caption{$k-L$ plane is seperated by marginal curves determined by $\mathcal{A}_{a,b,\lambda}$ for the case $ab>0$. Black marginal  curves are determined by Eq.\eqref{ab>0}. The parameters are (a) $a=-1,b=-4,\lambda=1$,(b) $a=1,b=4,\lambda=1$.}\label{figB}
		\end{figure}
		That is, 
		\begin{equation}
			L_c=\begin{cases}
				L_{1,0}=\sqrt{\frac{\lambda}{ab}}\pi,\quad &\text{if}\ a,b>0.\\
				L_{-1,0}=\sqrt{\frac{\lambda}{ab}}\arccot(\frac{b-\lambda a}{2\sqrt{\lambda ab}}),\quad &\text{if}\ a,b<0.
			\end{cases}
		\end{equation}
		\item[Case III.] $ab<0$, Eq.\eqref{a3} yields:
		$$\coth(\sqrt{-\frac{ab}{\lambda}}L)=\frac{\frac{kb}{\lambda}+a}{(k-1)\sqrt{-\frac{ab}{\lambda}}}.$$
		Only if $\frac{\frac{kb}{\lambda}+a}{(k-1)\sqrt{-\frac{ab}{\lambda}}}>1$ the marginal curves exists, which can be defined by
		\begin{equation}\label{ab<0}
			L_k\triangleq\sqrt{-\frac{\lambda}{ab}}\coth^{-1}(\frac{\frac{kb}{\lambda}+a}{(k-1)\sqrt{-\frac{ab}{\lambda}}})>0.
		\end{equation}
		More precisely, let
		$$h(k)\triangleq\frac{\frac{kb}{\lambda}+a}{(k-1)\sqrt{-\frac{ab}{\lambda}}},$$
		which is continuous in $(k,L)\in(-1,1)\times(0,+\infty)$. Denote by
		$$
		k_1\triangleq\frac{-\lambda a-\sqrt{-\lambda ab}}{b-\sqrt{-\lambda ab}}=\sqrt{\dfrac{-\lambda a}{b}}{\rm Sgn}(a),
		$$
		with $h(k_1)=1$.
		\begin{itemize}
			\item[$\bullet$] $-\lambda a\geq b>0$, $h'(k)>0$. The range of $h(k)$ is $(h(-1),+\infty)$ with $h(-1)>1$.
			$$
			\mathcal{A}_{a,b,\lambda}=\big\{(k,L_k)|k\in(-1,1)\big\}.
			$$
			\item[$\bullet$] $0>-\lambda a>b$, $h'(k)>0$. The range of $h(k)$ is $(h(-1),+\infty)$ with $h(-1)<-1$.
			$$
			\mathcal{A}_{a,b,\lambda}=\big\{(k,L_k)|k\in(k_1,1)\big\}.
			$$
			\item[$\bullet$] $b>-\lambda a>0$, $h'(k)<0$. The range of $h(k)$ is $(-\infty,h(-1))$ with $h(-1)>1$.
			$$
			\mathcal{A}_{a,b,\lambda}=\big\{(k,L_k)|k\in(-1,k_1)\big\}.
			$$
			\item[$\bullet$] $0>b\geq-\lambda a$, $h'(k)<0$. The range of $h(k)$ is $(-\infty,h(-1))$ with $h(-1)<-1$. We obtain
			$L_k<0$ for $k\in(-1,1)$.
			$$
			\mathcal{A}_{a,b,\lambda}=\emptyset.
			$$
		\end{itemize}
		\begin{figure}[H]
			\centering
			\subfigure[]{
				\includegraphics[width=0.45\textwidth]{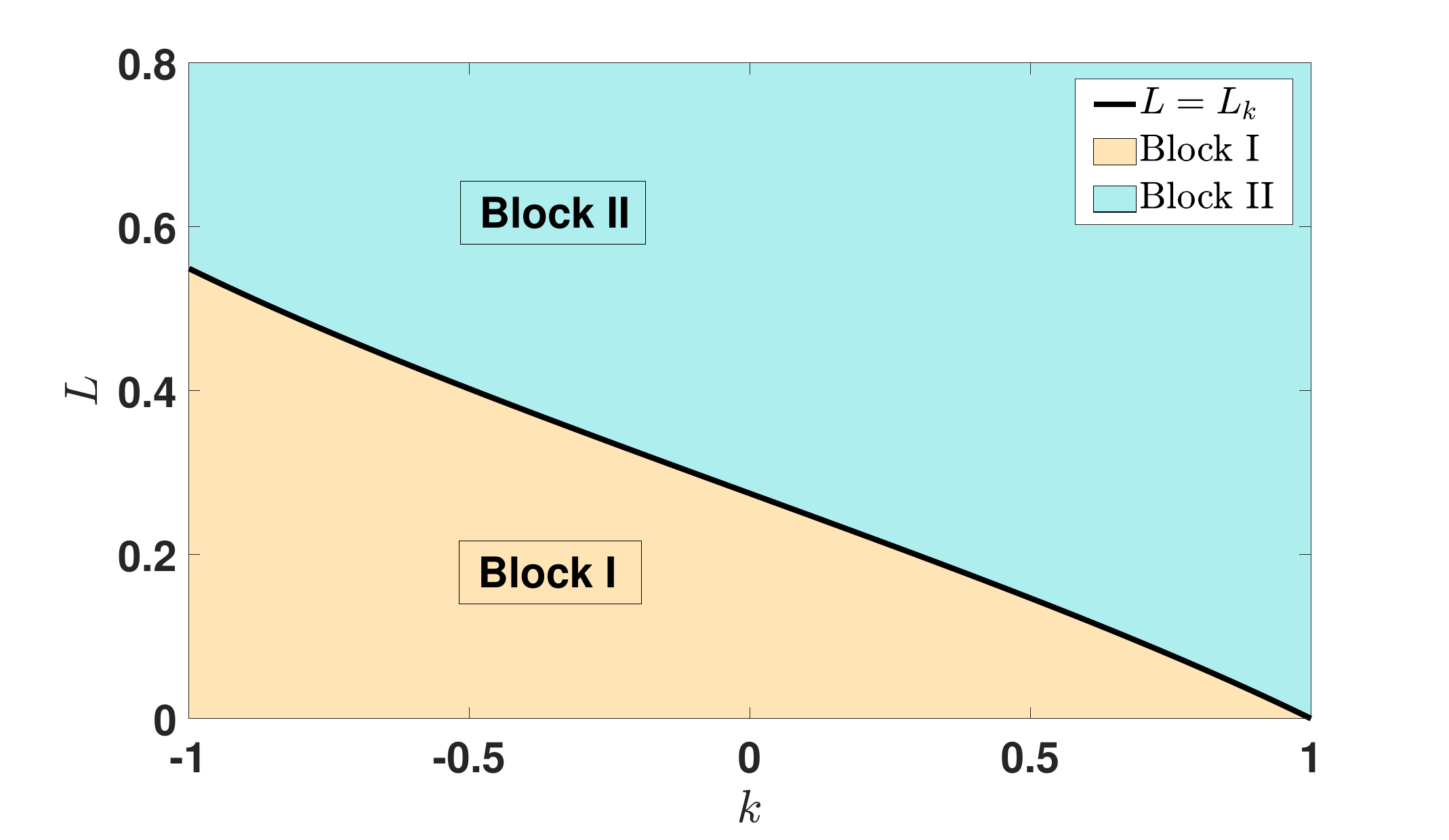}
				\label{figC1}
			}
			\centering
			\subfigure[]{
				\includegraphics[width=0.45\textwidth]{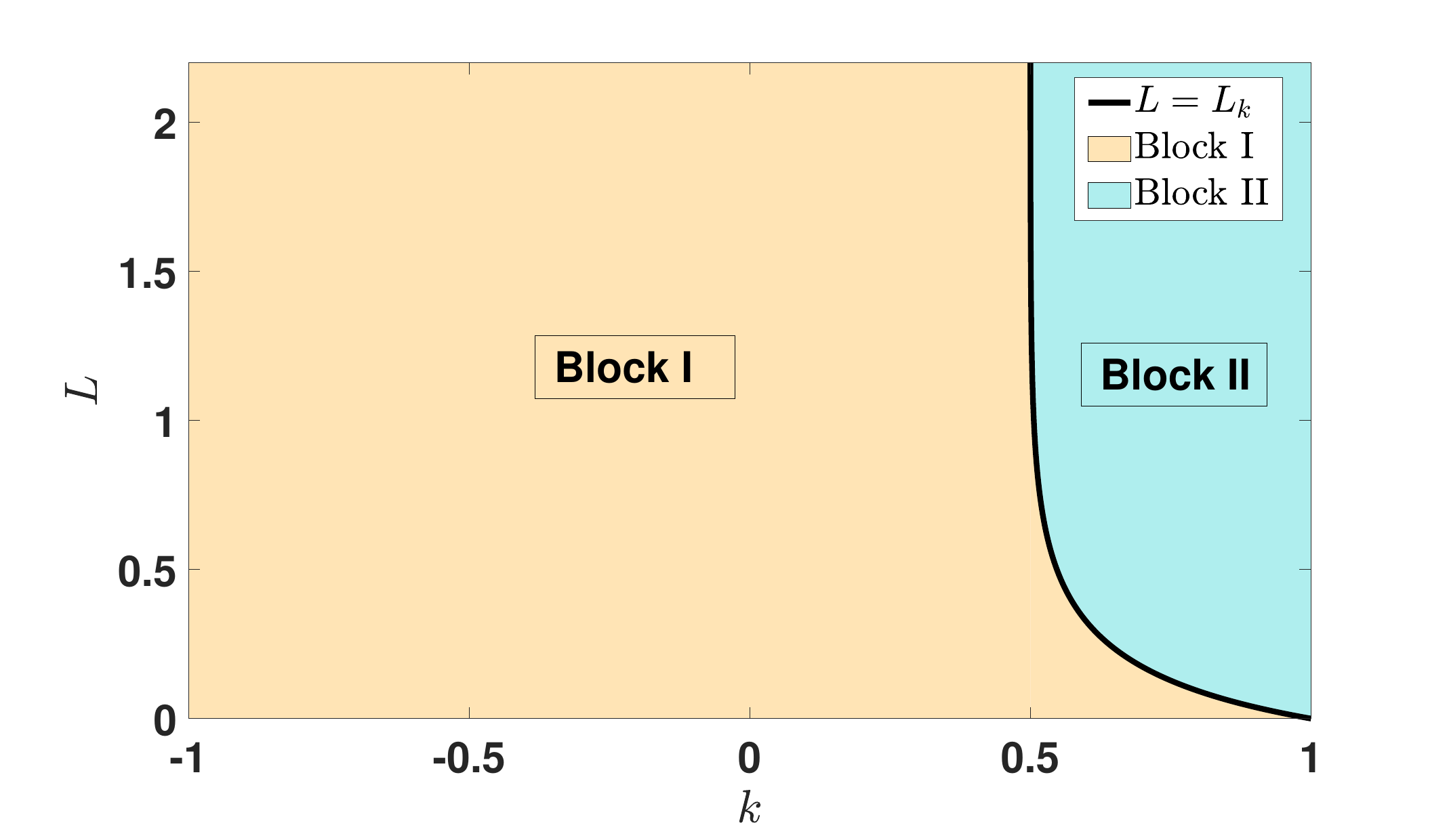}
				\label{figC2}
			}
			\subfigure[]{
				\includegraphics[width=0.45\textwidth]{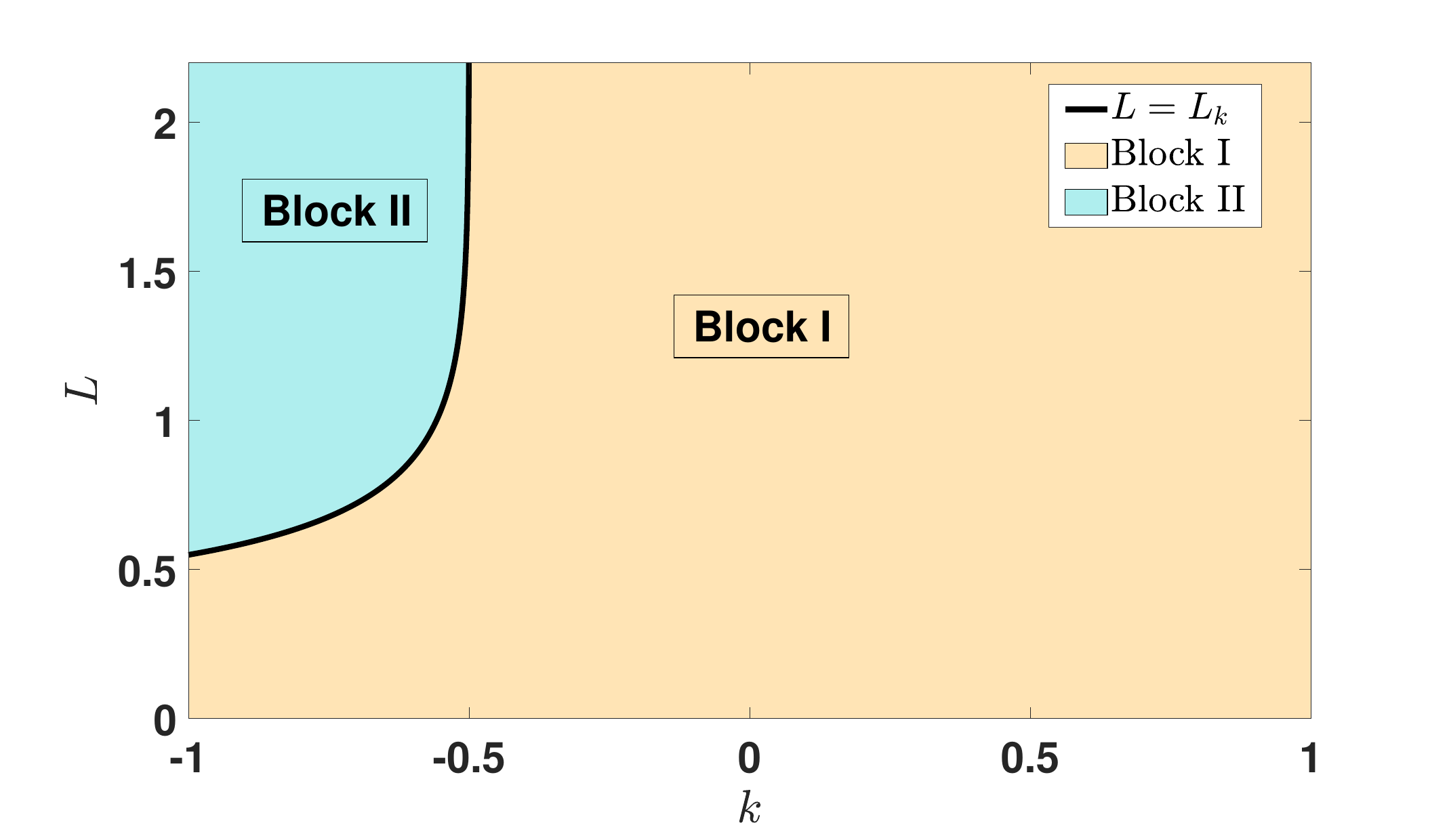}
				\label{figC3}}
			\subfigure[]{
				\includegraphics[width=0.45\textwidth]{C4.pdf}
				\label{figC4}}
			\centering
			\caption{$k-L$ plane is seperated into several blocks by marginal curves determined by $\mathcal{A}_{a,b,\lambda}$ for the case $ab<0$. Black marginal  curves are determined by Eq.\eqref{ab<0}. The parameters are (a) $a=-4,b=1,\lambda=1$,(b) $a=1,b=-4,\lambda=1$, (c) $a=-1,b=4,\lambda=1$, (d) $a=4,b=-1,\lambda=1$.}\label{figC}
		\end{figure}
		
		
		It is not difficult to obtain while $ab<0$:
		$$L_c=\begin{cases}
			L_{-1}=\sqrt{-\frac{\lambda}{ab}}\coth^{-1}(\frac{b-\lambda a}{2\sqrt{-\lambda ab}}),\quad &\text{if}\ -\lambda a>b>0. \\
			+\infty,\quad&\text{else}.
		\end{cases}$$
	\end{itemize}
	We finish the discuss on the set $\mathcal{A}_{a,b,\lambda}$.
	
	\bibliographystyle{plain}
	\bibliography{20230504}
\end{document}